\title{Discrete homotopy of token configurations}
\author{Bob Lutz}
\address{Mathematical Sciences Research Institute, 17 Gauss Way, Berkeley, CA 94720}
\email{boblutz13@gmail.com}
\thanks{Work of the author was supported by NSF grant DMS-1440140 while in residence at the Mathematical Sciences Research Institute in Berkeley, California during the spring 2020 semester.}
\subjclass[2020]{05C99, 55Q99, 20F36, 55R80}
\begin{document}
\begin{abstract}
This paper studies graphical analogs of symmetric products and unordered configuration spaces in topology. We do so from the perspective of the discrete homotopy theory introduced by Barcelo et al. Our first result is a combinatorial version of a theorem of P. A. Smith, which says that the fundamental group of any nontrivial symmetric product of $X$ is isomorphic to $H_1(X)$. Our second result gives conditions under which the $n$-strand braid group of a graph is isomorphic to its discrete analog.
\end{abstract}
\maketitle

\section{Introduction}

Interesting classes of spaces arise from the action of the symmetric group $\Sigma_n$ on the coordinates of a product space. For example, the \emph{symmetric product} $\sp{X}{n}$ of a topological space $X$ is the quotient of the Cartesian product $X^n$ by $\Sigma_n$. Symmetric products are studied in topology for their nice homotopical properties, in geometry when $X$ is an algebraic curve, and in physics as examples of orbifolds \cite{aguilar2008, bantay2003, macdonald1962}.

Another example is the \emph{(unordered) configuration space} $\uc{X}{n}$, defined as the quotient by $\Sigma_n$ of the space
\[\{(x_1,\ldots,x_n)\in X^n : x_i\neq x_j\mbox{ if }i\neq j\}.\]
Configuration spaces and their fundamental groups, called \emph{braid groups}, are central objects in many areas, including knot theory, mapping class groups and motion planning \cite{birman1974, farber2003}.

In this paper, we study combinatorial analogs of these quotient spaces. Given a graph $G$, let $\rp{G}{n}$ denote the quotient graph of the Cartesian graph product $G^n$ by the action of $\Sigma_n$. This quotient is sometimes called a \emph{reduced power} of $G$ \cite{hammack2016}. The vertices of $\rp{G}{n}$ can be regarded as configurations of $n$ identical tokens on the vertices of $G$ with overlaps allowed. Let $\tok{G}{n}$ denote the subgraph of $\rp{G}{n}$ induced by all token configurations with no overlaps. This subgraph is called the \emph{$n$-token graph} of $G$ \cite{fabmon2012}.

\begin{figure}[ht]
\begin{tikzpicture}[scale=0.95]
\def\a{3.5}

\draw (0,0) circle (1);
\draw[fill=white] (0:1) circle (0.14);
\draw[fill=white] (40:1) circle (0.14);
\draw[fill=black] (80:1) circle (0.14);
\draw[fill=black] (120:1) circle (0.14);
\draw[fill=white] (160:1) circle (0.14);
\draw[fill=white] (200:1) circle (0.14);
\draw[fill=white] (240:1) circle (0.14);
\draw[fill=black] (280:1) circle (0.14);
\draw[fill=white] (320:1) circle (0.14);

\draw (\a,0) circle (1);
\draw[fill=white] ($(\a,0)+(0:1)$) circle (0.14);
\draw[fill=black] ($(\a,0)+(40:1)$) circle (0.14);
\draw[fill=white] ($(\a,0)+(80:1)$) circle (0.14);
\draw[fill=black] ($(\a,0)+(120:1)$) circle (0.14);
\draw[fill=white] ($(\a,0)+(160:1)$) circle (0.14);
\draw[fill=white] ($(\a,0)+(200:1)$) circle (0.14);
\draw[fill=white] ($(\a,0)+(240:1)$) circle (0.14);
\draw[fill=black] ($(\a,0)+(280:1)$) circle (0.14);
\draw[fill=white] ($(\a,0)+(320:1)$) circle (0.14);

\draw (2*\a,0) circle (1);
\draw[fill=white] ($(2*\a,0)+(0:1)$) circle (0.14);
\draw[fill=black] ($(2*\a,0)+(40:1)$) circle (0.14);
\draw[fill=white] ($(2*\a,0)+(80:1)$) circle (0.14);
\draw[fill=white] ($(2*\a,0)+(120:1)$) circle (0.14);
\draw[fill=black] ($(2*\a,0)+(160:1)$) circle (0.14);
\draw[fill=white] ($(2*\a,0)+(200:1)$) circle (0.14);
\draw[fill=white] ($(2*\a,0)+(240:1)$) circle (0.14);
\draw[fill=black] ($(2*\a,0)+(280:1)$) circle (0.14);
\draw[fill=white] ($(2*\a,0)+(320:1)$) circle (0.14);

\draw (3*\a,0) circle (1);
\draw[fill=white] ($(3*\a,0)+(0:1)$) circle (0.14);
\draw[fill=black] ($(3*\a,0)+(40:1)$) circle (0.14);
\draw[fill=white] ($(3*\a,0)+(80:1)$) circle (0.14);
\draw[fill=white] ($(3*\a,0)+(120:1)$) circle (0.14);
\draw[fill=black] ($(3*\a,0)+(160:1)$) circle (0.14);
\draw[fill=white] ($(3*\a,0)+(200:1)$) circle (0.14);
\draw[fill=white] ($(3*\a,0)+(240:1)$) circle (0.14);
\draw[fill=white] ($(3*\a,0)+(280:1)$) circle (0.14);
\draw[fill=black] ($(3*\a,0)+(320:1)$) circle (0.14);
\end{tikzpicture}
\caption{A path of 3-token configurations on the 9-cycle graph.}
\end{figure}
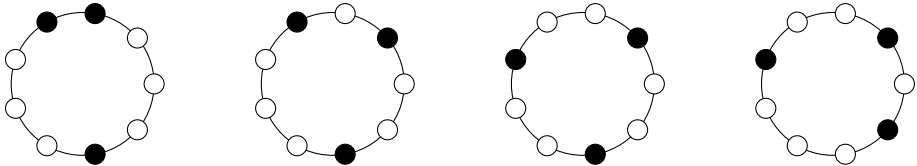

Mirroring the interest in homotopy groups of $\sp{X}{n}$ and $\uc{X}{n}$, we study the graphical analogs $\rp{G}{n}$ and $\tok{G}{n}$ in terms of the discrete homotopy theory introduced in \cite{barcelo2001}. In this theory, intervals are replaced by path graphs, and continuous maps $S^n\to X$ are replaced by graph maps $\Z^n\to G$ with finite support. The relevant groups $A_n(G)$, called the \emph{discrete homotopy groups} of $G$, are defined combinatorially. Originally used to study complex systems and their dynamics, this theory has found intriguing applications to subspace arrangements and group theory \cite{barcelo2005, barcelo2011, delabie2020}. An accompanying homology theory was introduced in \cite{barcelo2014}; the resulting groups $\hc_n(G)$ are called the \emph{discrete singular cubical homology groups} of $G$. 

A key feature of the symmetric product is that it turns homotopy into homology. More precisely, it is often the case that $\pi_k(\sp{X}{n}) \cong H_k(X)$. The earliest result in this vein is due to P. A. Smith \cite{smith1936}, who showed essentially that if $n\geq 2$, then $\pi_1(\sp{X}{n}) \cong H_1(X)$ for any CW complex $X$ (see \cite[Satz 12.15]{dold1961}). We prove a discrete version of this result:

\begin{thm}
If $n\geq 2$, then the discrete fundamental group $A_1(\rp{G}{n})$ is isomorphic to the first discrete singular cubical homology group $\hc_1(G)$.
\label{thm:symab}
\end{thm}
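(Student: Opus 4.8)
The plan is to compare $A_1(\rp{G}{n})$ with $\hc_1(G)$ through a displacement homomorphism and the single-token inclusion, using $n\geq 2$ only to force commutativity. Fix as basepoint the configuration $\ast$ with all $n$ tokens at a vertex $v_0$; since $\ast$ is fixed by $\Sigma_n$, every based loop in $\rp{G}{n}$ lifts (non-uniquely) to a based loop in the Cartesian power $G^n$, and each edge of $\rp{G}{n}$ records the motion of a single token along an edge of $G$. I would first define the \emph{displacement} homomorphism $\Phi\colon A_1(\rp{G}{n})\to\hc_1(G)$ sending a loop to the formal sum of the oriented edges traversed by its tokens; because the loop returns to $\ast$ this chain is a cycle, and the elementary $A$-homotopies (backtracks, triangles, and squares of $\rp{G}{n}$) each alter it by a boundary, so $\Phi$ is well defined. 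Dually, let $\iota\colon G\to\rp{G}{n}$ be the graph map placing one token at $v$ and the remaining $n-1$ at $v_0$, inducing $\iota_\ast\colon A_1(G)\to A_1(\rp{G}{n})$; then $\Phi\circ\iota_\ast$ is precisely the natural Hurewicz map $A_1(G)\to\hc_1(G)$ sending a loop to its homology class.

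The heart of the argument is to show $\iota_\ast$ is surjective. Write $q\colon G^n\to\rp{G}{n}$ for the quotient and $\delta_i\colon G\to G^n$ for the $i$-th coordinate inclusion, so that $q\circ\delta_i=\iota$ for every $i$. Lifting a based loop $L$ to $G^n$, each edge changes a single coordinate, and two consecutive edges changing \emph{different} coordinates span a square (a $4$-cycle) of $G^n$ across which an $A$-homotopy slides; sorting the moves by coordinate through a bubble-sort of such squares rewrites the lift, up to homotopy, as a concatenation $\delta_1(\ell_1)\cdots\delta_n(\ell_n)$ of loops each confined to one coordinate, for some closed walks $\ell_i$ in $G$. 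Applying $q$ gives $[L]=\iota_\ast([\ell_1]\cdots[\ell_n])$, so $\iota_\ast$ is onto. The hypothesis $n\geq 2$ then forces commutativity: for closed walks $\ell,m$ the loops $\delta_1(\ell)$ and $\delta_2(m)$ occupy disjoint coordinates, so the same square-sorting gives $\iota_\ast[\ell]\,\iota_\ast[m]=\iota_\ast[m]\,\iota_\ast[\ell]$; as $\iota_\ast$ is onto, $A_1(\rp{G}{n})$ is abelian, and $\iota_\ast$ factors through a surjection $\bar\iota_\ast\colon A_1(G)^{\mathrm{ab}}\to A_1(\rp{G}{n})$.

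It remains to prove $\bar\iota_\ast$ is injective, for which I would invoke the degree-one discrete Hurewicz isomorphism $A_1(G)^{\mathrm{ab}}\cong\hc_1(G)$: both groups are computed from the $2$-complex obtained from $G$ by filling its triangles and $4$-cycles ($A_1(G)$ as the ordinary fundamental group and $\hc_1(G)$ as the first homology of that complex), so the classical Hurewicz theorem applies and identifies $\Phi\circ\iota_\ast$ with abelianization. Thus $\Phi\circ\bar\iota_\ast$ is an isomorphism, whence the surjection $\bar\iota_\ast$ is also injective, and $\Phi$ and $\bar\iota_\ast$ are mutually inverse isomorphisms $A_1(\rp{G}{n})\cong\hc_1(G)$.

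I expect the main obstacle to be the surjectivity step, specifically the homotopy bookkeeping of the coordinate-sorting when tokens collide: it is essential here that $\rp{G}{n}$ allows overlaps, so that the squares used to commute moves of distinct tokens are always present — this is exactly what fails for $\tok{G}{n}$, and is why $n\geq 2$ alone does not suffice there. A secondary point requiring care is the clean identification $A_1(G)^{\mathrm{ab}}\cong\hc_1(G)$ in degree one, together with verifying that $\Phi$ is insensitive to the elementary $A$-homotopies of $\rp{G}{n}$; both are routine but must be checked against the precise definitions of $A$-homotopy and of the cubical boundary.
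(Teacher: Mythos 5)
Your proposal is correct, and its first half coincides with the paper's own argument: lifting a based loop of $\rp{G}{n}$ to $G^n$ (automatic at the basepoint, since the fiber over $v_0^n$ is the single vertex $(v_0,\ldots,v_0)$) is Lemma \ref{lem:etasurj}, your coordinate-sorting of the lift is the content of Lemma \ref{lem:prod} ($A_1(G^n)\cong A_1(G)^n$), and your commutativity argument via $q\circ\delta_1=q\circ\delta_2=\iota$ is exactly Smith's trick as implemented in Proposition \ref{prop:smith}. Where you genuinely diverge is the identification step. The paper converts the abelian group $A_1(\rp{G}{n})$ into $H_1(X(\rp{G}{n}))$ via Propositions \ref{prop:dfg} and \ref{prop:hur}, and then needs Proposition \ref{prop:hombasis} --- an integral version of the cycle-basis theorem of Hammack et al., requiring the reworked Lemma \ref{lem:hammack} and the citation of \cite[Proposition 5]{hammack2016} --- to see that filling the Cartesian squares collapses $H_1(\rp{G}{n})=H_1(G\x)\oplus\mathcal{S}$ onto $H_1(X(G\x))\cong\hc_1(G)$. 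You bypass this entirely: your displacement homomorphism $\Phi$ is the paper's chain map $\phi$ from the proof of Proposition \ref{prop:hombasis}, but descended to homotopy classes and valued in $\hc_1(G)\cong H_1(X(G))$ rather than in simplicial $H_1(G)$, and you use it as a retraction --- since $\Phi\circ\bar\iota_*$ is the Hurewicz isomorphism of Proposition \ref{prop:hur}, the surjection $\bar\iota_*$ must be injective, with no need to identify a generating set for $\ker\Phi$. The trade-off is real but favorable to you in economy: the paper's route produces Proposition \ref{prop:hombasis} as a standalone result of independent interest, while yours is leaner but shifts the essential work into the well-definedness of $\Phi$ on homotopy classes, which you rightly flag. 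That verification does go through cleanly: the displacement of any triangle or square of $\rp{G}{n}$ occurring in a homotopy grid is an integral 1-cycle carried by at most four edges of $G$, hence (by flow decomposition in a simple graph) it is $0$, $\pm$ a $3$-cycle, or $\pm$ a $4$-cycle of $G$, and all of these vanish in $\hc_1(G)\cong H_1(X(G))$ --- a $4$-cycle with a chord being the sum of two filled triangles. Note that this is where your argument silently uses Propositions \ref{prop:dfg} and \ref{prop:hur} to know that $\hc_1(G)$ kills exactly the $3$- and $4$-cycles; morally it is the same computation that underlies the cycle basis, so the content has been relocated rather than eliminated.
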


In essence, Theorem \ref{thm:symab} says that $\rp{G}{n}$ abelianizes the discrete fundamental group of $G$ when $n\geq 2$. To prove this, we combine the argument of Smith with a discrete Hurewicz theorem in dimension 1 \cite[Theorem 4.1]{barcelo2014} and the main result of \cite{hammack2016}, which describes an explicit cycle basis of $\rp{G}{n}$.

Our second result connects discrete homotopy theory to braid groups. Recall that the \emph{braid groups} of a space $X$ are defined as $B_n(X)=\pi_1(\uc{X}{n})$. Classically, the study of braid groups was restricted to manifolds \cite{birman1969}. For example, the Artin braid groups can be defined as $B_n(\mathbb{R}^2)$. The braid groups of graphs, regarded as 1-dimensional CW complexes, are of considerable interest among non-manifolds \cite{abrams2000, crisp2004, farley2005, ko2012}.

One way to think of points in $\uc{G}{n}$ is as configurations of robots moving continuously about a factory floor, where the edges of $G$ represent tracks or guidewires \cite{abrams2002}. The braid groups $B_n(G)$ measure the complexity of control schemes for this system \cite{ghrist2001, ghrist2007}. When $n$ is small, the discrete fundamental group $A_1(\tok{G}{n})$ provides the same data as the braid group $B_n(G)$, except that it ignores \emph{local exchanges} of robots around small cycles of $G$. Thus when $G$ contains no small cycles, the groups are the same:

\begin{thm}
If $G$ is sufficiently subdivided for $n$ and contains no 3- or 4-cycles, then the discrete fundamental group $A_1(\tok{G}{n})$ is isomorphic to the braid group $B_n(G)$.
\label{thm:tokbraid}
\end{thm}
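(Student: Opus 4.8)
The plan is to route the comparison through Abrams' discretized configuration space. Recall that the \emph{discretized configuration space} $\mathrm{UD}_n(G)$ is the cube complex whose vertices are the $n$-element subsets of $V(G)$, whose edges join two subsets that differ by sliding a single token along an edge of $G$ to an unoccupied vertex, and whose higher cubes record collections of such slides along pairwise disjoint edges. By construction its $1$-skeleton is exactly $\tok{G}{n}$. Abrams proved that when $G$ is sufficiently subdivided for $n$, the inclusion $\mathrm{UD}_n(G)\hookrightarrow\uc{G}{n}$ is a homotopy equivalence \cite{abrams2000}, so that $\pi_1(\mathrm{UD}_n(G))\cong B_n(G)$. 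Since attaching cells of dimension at least $3$ does not change the fundamental group, it suffices to analyze the $2$-skeleton $Y$ of $\mathrm{UD}_n(G)$, whose $2$-cells are glued along the ``commuting squares'' coming from pairs of disjoint simultaneous slides.

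First I would invoke the standard description of the discrete fundamental group of a graph: $A_1(H)$ is isomorphic to $\pi_1(X_H)$, where $X_H$ is the $2$-complex obtained from $H$ by attaching a $2$-cell along every $3$-cycle and every $4$-cycle. This follows from unwinding the definition of $A$-homotopy in \cite{barcelo2001}, since an elementary $A$-homotopy of loops is precisely a move that pushes a loop across a triangle or a quadrilateral, together with the insertion and deletion of stutters and backtracks. Applying this with $H=\tok{G}{n}$ reduces the theorem to identifying $X_{\tok{G}{n}}$ with the $2$-skeleton $Y$ above.

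The combinatorial heart of the argument is the claim that, under the hypothesis that $G$ has no $3$- or $4$-cycles, the short cycles of $\tok{G}{n}$ are exactly the commuting squares of $\mathrm{UD}_n(G)$. I would prove this by bookkeeping: in any closed walk of length $3$ or $4$ in $\tok{G}{n}$, each token that moves must return to its starting vertex, so it moves an even number of times, which tightly constrains how many tokens can move. A nondegenerate length-$3$ walk forces a single token to traverse a $3$-cycle of $G$, and a nondegenerate length-$4$ walk is either a single token traversing a $4$-cycle of $G$ or two tokens sliding back and forth along disjoint edges. The hypothesis kills the first two possibilities, leaving only the commuting squares. Hence $X_{\tok{G}{n}}=Y$, and chaining the isomorphisms gives
\[
A_1(\tok{G}{n})\;\cong\;\pi_1\bigl(X_{\tok{G}{n}}\bigr)\;=\;\pi_1(Y)\;=\;\pi_1(\mathrm{UD}_n(G))\;\cong\;B_n(G).
\]

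I expect the main obstacle to be the first step rather than the combinatorics: making precise, and correctly citing or reproving, the identification of $A_1$ with the fundamental group of the triangle-and-square-filled complex, including checking that \emph{every} $3$- and $4$-cycle (not merely the induced ones) contributes a relation and that the reflexive convention on graphs in \cite{barcelo2001} matches the cellular gluing. A secondary point to pin down is the exact form of ``sufficiently subdivided for $n$'' being used, so that Abrams' deformation retraction applies verbatim; since that condition is what guarantees $\mathrm{UD}_n(G)\simeq\uc{G}{n}$, the two hypotheses of the theorem enter in complementary ways, with the no-small-cycles condition governing the discrete side and the subdivision condition governing the topological side.
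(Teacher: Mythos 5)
Your proposal takes essentially the same route as the paper's proof: Proposition \ref{prop:dfg} converts $A_1(\tok{G}{n})$ into $\pi_1$ of the triangle-and-square-filled complex $X(\tok{G}{n})$, Abrams' theorem (Proposition \ref{prop:abrams}) converts $B_n(G)$ into $\pi_1(\ud{G}{n})$, and the bridge is the identification $X(\tok{G}{n})\cong \sk_2\ud{G}{n}$, which is exactly the paper's Proposition \ref{prop:skel}. The architecture is sound, and your closing chain of isomorphisms is the paper's proof.

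There is, however, a genuine error in your combinatorial step, and it sits precisely at the delicate point of the whole theorem. You justify the classification of short cycles by asserting that in a closed walk of length $3$ or $4$ in $\tok{G}{n}$ ``each token that moves must return to its starting vertex, so it moves an even number of times.'' Both clauses are false: tokens are indistinguishable, so a closed walk in $\tok{G}{n}$ only returns the \emph{set} of occupied vertices to itself, and the individual tokens may be permuted. For instance, two tokens can rotate around a $3$-cycle of $G$, giving a $3$-cycle of $\tok{G}{n}$ in which neither moving token returns to its start and each moves an odd number of times; likewise two or three tokens can rotate around a $4$-cycle of $G$. These are exactly the local exchanges of types (a$'$), (b$'$), (c), (c$'$) discussed in Section \ref{sec:tokbraid} of the paper, and your case analysis omits all of them. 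The repair is standard bookkeeping: each of the $\ell\leq 4$ elementary moves deletes a vertex $u_i$ and inserts an adjacent vertex $v_i$; closure of the walk forces a fixed-point-free permutation $\sigma$ of the indices with $u_{\sigma(i)}=v_i$; each $k$-cycle of $\sigma$ chains into a closed walk of length $k$ in $G$, so any cycle of $\sigma$ of length $3$ or $4$ requires a $3$- or $4$-cycle of $G$ (degenerate length-$4$ closed walks in $G$ produce non-simple walks in $\tok{G}{n}$), and the only possibility surviving the hypothesis is $\sigma$ a product of two transpositions on edges with disjoint endpoints, interleaved --- i.e.\ a Cartesian square. So your conclusion is correct, because every case you missed also needs a small cycle of $G$, but the argument as written does not establish it. One of your stated worries, by contrast, is moot: once $\tok{G}{n}$ is known to have no $3$-cycles, every $4$-cycle is automatically chordless, so the distinction between chordless and arbitrary $4$-cycles in Proposition \ref{prop:dfg} causes no trouble here.
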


The hypothesis that $G$ is \emph{sufficiently subdivided} simply places an upper bound on $n$; we will give a proper definition in Section \ref{sec:tok}. Our proof of Theorem \ref{thm:tokbraid} uses a cubical complex $\ud{G}{n}$ introduced by Abrams \cite{abrams2000}. This space, called the \emph{(unordered) discrete configuration space} of $G$, is a ``skeletonized'' version of $\uc{G}{n}$.

\begin{cor}
Fix a graph $G$ and a positive integer $n$. By subdividing the edges of $G$, one can obtain a graph $H$ such that $A_1(\tok{H}{n})$ is isomorphic to $B_n(G)$.
\end{cor}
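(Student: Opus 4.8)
The plan is to deduce this directly from Theorem \ref{thm:tokbraid} together with the homeomorphism invariance of braid groups. The key observation is that subdividing an edge does not alter the space obtained by realizing a graph as a one-dimensional CW complex: if $H$ is any subdivision of $G$, then the geometric realizations of $G$ and $H$ are homeomorphic. Since the unordered configuration space $\uc{G}{n}$ depends only on this realization, we have $\uc{H}{n}\cong\uc{G}{n}$, and therefore
\[
B_n(H)=\pi_1(\uc{H}{n})\cong\pi_1(\uc{G}{n})=B_n(G).
\]
It thus suffices to exhibit a subdivision $H$ of $G$ to which Theorem \ref{thm:tokbraid} applies.

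First I would subdivide each edge of $G$ the same number of times, producing a graph $H$. Subdividing each edge $s$ times multiplies the length of every cycle by $s+1$ and lengthens every path between essential vertices (those of degree $\neq 2$) accordingly. Because the hypothesis that a graph is sufficiently subdivided for $n$ amounts, per the discussion preceding Theorem \ref{thm:tokbraid}, to an upper bound on $n$ relative to these lengths, it is satisfied once $s$ is taken large enough. Simultaneously, every cycle of $G$ has length at least $3$, so after a single subdivision of each edge its length is at least $6$; hence for $s\geq 1$ the graph $H$ contains no $3$- or $4$-cycles. Choosing $s$ large enough to meet both requirements yields a subdivision $H$ that is sufficiently subdivided for $n$ and has girth at least $5$.

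Finally I would apply Theorem \ref{thm:tokbraid} to $H$, obtaining $A_1(\tok{H}{n})\cong B_n(H)$, and combine this with the isomorphism $B_n(H)\cong B_n(G)$ established above to conclude that $A_1(\tok{H}{n})\cong B_n(G)$, as desired.

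I do not anticipate a serious obstacle, since the substantive content is already carried by Theorem \ref{thm:tokbraid}; the corollary merely records that its two hypotheses can always be forced by subdivision. The one point requiring care is to verify, against the precise definition of \emph{sufficiently subdivided for $n$} given in Section \ref{sec:tok}, that this condition is monotone under further subdivision, so that a single $s$ can be chosen large enough to destroy all short cycles \emph{and} secure the subdivision hypothesis at once.
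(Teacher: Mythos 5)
Your proposal is correct and matches the paper's (implicitly sketched) argument: the paper likewise notes in Section \ref{sec:tok} that for any fixed $G$ and $n$ one can subdivide edges to obtain a homeomorphic graph that is sufficiently subdivided for $n$, that this does not affect the braid groups, and then the corollary follows from Theorem \ref{thm:tokbraid}. Your uniform-subdivision bookkeeping (noting that subdivision preserves the essential vertices, scales all path and cycle lengths by $s+1$, and kills all $3$- and $4$-cycles once $s\geq 1$) correctly fills in the details the paper leaves to the reader.
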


The paper is organized as follows. In Section \ref{sec:dischom} we review the basics of discrete homotopy theory and discrete singular cubical homology. In Section \ref{sec:tok} we discuss basic properties of reduced powers and token graphs and provide examples. In Section \ref{sec:symab} we prove Theorem \ref{thm:symab}. In Section \ref{sec:tokbraid} we prove Theorem \ref{thm:tokbraid} and discuss the meaning of \emph{local exchanges}. Finally, in Section \ref{sec:openq} we pose several open questions.
\section{Discrete homotopy and homology groups}
\label{sec:dischom}

By a \emph{graph} we will mean a connected, simple, locally finite one. We write $u\simeq v$ when $u$ and $v$ are adjacent or equal vertices of a graph. For graphs $G$ and $H$, let us write $f:G\to H$ when $f$ is a function from the vertex set of $G$ to the vertex set of $H$. A \emph{graph map} is a function $f:G\to H$ such that if $u\simeq v$, then $f(u)\simeq f(v)$. Let $\Z$ denote the graph whose vertices are the integers, with an edge between $i$ and $j$ if and only if $|i-j|=1$. For $m\geq 0$, let $I_m$ denote the subgraph of $\Z$ induced by $\{0,\ldots,m\}$.

\subsection{Discrete fundamental group}

Fix $v_0\in G$. We write $f:(\Z,\partial\Z)\to (G,v_0)$ if $f$ is a function $\Z\to G$ and there exists an integer $r_f\geq 0$ such that $f(i)=v_0$ whenever $|i|\geq r_f$. We will assume that $r_f$ is the minimum such integer. The \emph{concatenation} of two graph maps $f,g:(\Z,\partial \Z)\to (G,v_0)$ is the graph map $p:(\Z,\partial\Z)\to (G,v_0)$ given by
\[p(i)=\begin{cases} f(i+r_f)&\mbox{if }i\leq 0\\ g(i-r_g)&\mbox{if }i\geq 0\end{cases}\]
Let $h:\Z\times I_m\to G$ be a graph map for some $m$, and write $h_j(i)=h(i,j)$ for all $i$ and $j$. We say that $h$ is a \emph{based homotopy} from $f$ to $g$ if
\begin{enumerate}
\item $h_0=f$ and $h_m=g$
\item $h_j$ is a graph map $(\Z,\partial \Z)\to (G,v_0)$ for all $j$.
\end{enumerate}
Based homotopy defines an equivalence relation on graph maps $(\Z,\partial\Z)\to (G,v_0)$.

\begin{mydef}
The \emph{discrete fundamental group} of $G$ is the set $A_1(G,v_0)$ of based homotopy classes $[f]$ of graph maps $f:(\Z,\partial \Z)\to (G,v_0)$, endowed with a group structure as follows. The identity element is the class of the identity graph map $\Z\to v_0$. The product of two classes $[f]$ and $[g]$ is the class of the concatenation of $f$ and $g$.
\end{mydef}

We will not prove that the group operation in $A_1(G,v_0)$ is well defined, or that it satisfies the group axioms. Our definition differs slightly, but not materially, from the original definition in \cite{barcelo2001}. Using similar ideas, one can define an infinite family of \emph{discrete homotopy groups} $A_n(G,v_0)$. Since we have assumed that $G$ is connected, the group $A_1(G,v_0)$ does not depend on the choice of base vertex $v_0$. We therefore write $A_1(G)=A_1(G,v_0)$.

\begin{prop}[{\cite[Proposition 5.12]{barcelo2001}}]
Let $X(G)$ denote the CW complex obtained from $G$ by attaching a 2-cell to each 3-cycle and chordless 4-cycle of $G$. We have $A_1(G)\cong \pi_1(X(G))$.
\label{prop:dfg}
\end{prop}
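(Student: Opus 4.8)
The plan is to exhibit an explicit homomorphism $\Phi\colon A_1(G)\to\pi_1(X(G))$ at the level of closed walks and to show it is an isomorphism. First I would record the basic dictionary: a graph map $f\colon(\Z,\partial\Z)\to(G,v_0)$ is the same data as the finite sequence $f(-r_f),f(-r_f+1),\dots,f(r_f)$, in which consecutive entries are adjacent or equal and both ends equal $v_0$; that is, a closed walk based at $v_0$ in which \emph{stalling} (repeating a vertex) is allowed. Regarding $G$ as the $1$-skeleton of $X(G)$, such a walk determines a based loop $\gamma_f$ in $X(G)$ by traversing each genuine edge at unit speed and remaining constant at each stall. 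I set $\Phi([f])=[\gamma_f]$. That $\Phi$ is a homomorphism is immediate, since concatenation of graph maps corresponds, up to reparametrization, to concatenation of loops; and $\Phi$ is surjective because $\pi_1(X(G))$ is generated by loops lying in the $1$-skeleton, each of which is visibly $\gamma_f$ for a suitable $f$.

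The crux of well-definedness is to analyze a based homotopy $h\colon\Z\times I_m\to G$ square by square. Because $\Z\times I_m$ carries the Cartesian product structure, the only edges of a unit square with corners $(i,j),(i{+}1,j),(i{+}1,j{+}1),(i,j{+}1)$ are its four sides, so the image is a closed walk $h_j(i)\simeq h_j(i{+}1)\simeq h_{j+1}(i{+}1)\simeq h_{j+1}(i)\simeq h_j(i)$ of combinatorial length at most $4$. Such a walk bounds a disk in $X(G)$: it is null-homotopic when it meets at most two vertices, it traverses a $3$-cycle when it meets three, and an embedded $4$-cycle when it meets four; in the last case, if the $4$-cycle has a chord in $G$ it splits into two $3$-cycles, and otherwise it is chordless, so in every case a $2$-cell of $X(G)$ (or a pair of them) fills it. Assembling these disks along the ladder $\Z\times\{j,j{+}1\}$ produces a homotopy rel basepoint from $\gamma_{h_j}$ to $\gamma_{h_{j+1}}$, and concatenating over $j$ shows $\gamma_f\simeq\gamma_g$ whenever $f$ and $g$ are based homotopic. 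Thus $\Phi$ is well defined.

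It remains to prove injectivity, which I expect to be the main obstacle. Fixing a spanning tree $T\subset G$ gives the standard presentation of $\pi_1(X(G))$ with one generator per edge outside $T$ and one relator per $3$-cycle and per chordless $4$-cycle. If $\Phi([f])=1$, then the edge-word of $f$ is carried to the empty word by a finite sequence of free reductions (insertion or deletion of an edge followed by its reverse) together with insertions and deletions of conjugates of the defining relators. The task is to realize each such algebraic move by an explicit based discrete homotopy that fixes the remainder of the walk. A free reduction $\cdots u\,v\,u\cdots\leftrightarrow\cdots u\,u\cdots$ is a single elementary move using a degenerate square; a chordless $4$-cycle relator is inserted or deleted by one elementary square whose image is exactly that $4$-cycle; and a $3$-cycle relator is removed by two elementary moves, namely contracting one corner across the triangle and then deleting the resulting backtrack. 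Conjugation presents no difficulty, since a discrete homotopy may be carried out on a subsegment of the walk while the complementary segments are held fixed by stalls. As chorded $4$-cycles reduce to pairs of $3$-cycles, these moves account for every relation of $\pi_1(X(G))$, whence $[f]=1$ in $A_1(G)$.

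The technical heart is thus a single \emph{realization lemma}: that based discrete-homotopy equivalence of closed walks is generated precisely by stall removal, backtracking, and the filling of $3$-cycles and (chordless or chorded) $4$-cycles. Its forward direction is the square-by-square decomposition of the second paragraph, and its reverse direction is the battery of explicit elementary homotopies of the third; matching these generators against the relators of $X(G)$ is what yields $A_1(G)\cong\pi_1(X(G))$. I expect the bookkeeping in the reverse direction—constructing honest graph maps $\Z\times I_m\to G$ that effect each move locally while respecting the basepoint and the minimality of $r_f$—to be the most delicate part of the argument.
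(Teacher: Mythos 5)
First, note that the paper itself does not prove this proposition: it is imported verbatim from \cite{barcelo2001}, so there is no internal proof to compare against, and your outline is in effect a reconstruction of the argument from that source. The route you take is the standard one: translate based graph maps into edge-loops, check well-definedness one lattice square at a time against the attached 2-cells, deduce surjectivity from the fact that the fundamental group of a 2-complex is carried by its 1-skeleton, and prove injectivity by realizing the combinatorial relations presenting $\pi_1(X(G))$ by explicit discrete homotopies. The first three steps are essentially correct as you state them (one small point: a square whose image has three distinct vertices need not trace a 3-cycle --- it can be a doubled backtrack such as $(a,b,a,c,a)$ --- but such walks are null-homotopic already in the 1-skeleton, so the conclusion stands).

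There is, however, one step that fails as written, and it fails precisely in the case the proposition is designed for. You assert that a chordless 4-cycle relator is inserted or deleted by \emph{one} elementary square whose image is exactly that 4-cycle. Write the 4-cycle as $(a,b,c,d,a)$; chordlessness means $a\not\simeq c$ and $b\not\simeq d$. Realizing the substitution of the sub-walk $(a,b,c)$ by $(a,d,c)$ through a single homotopy step $h\colon\Z\times I_1\to G$ forces the vertical adjacency $b\simeq d$, and inserting the entire boundary at a stall in one step forces $a\simeq c$; both are exactly what chordlessness forbids, since opposite corners of a chordless square lie at distance 2 while one homotopy step moves each entry of a walk by at most distance 1. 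The move is still realizable, but it takes two steps and uses stalls: for instance
\[(\ldots,a,a,b,c,\ldots)\ \longrightarrow\ (\ldots,a,d,c,c,\ldots)\]
is a valid single step (the vertical comparisons are $a\simeq a$, $d\simeq a$, $c\simeq b$, $c\simeq c$), and the closed walk $(a,b,c,d,a)$ contracts to the constant walk in two rows through $(a,a,d,d,a)$. So your battery of elementary homotopies must be repaired at this point. Beyond that, be aware that what you defer as bookkeeping --- passing from a word identity $w_f=\prod_k g_k r_{c_k}^{\pm1}g_k^{-1}$ in the spanning-tree presentation back to a chain of moves on the walk itself, and showing that stall insertion and shifting are based discrete homotopies --- is the bulk of the proof in \cite{barcelo2001}, not a routine check; as written, your proposal asserts this realization lemma rather than proving it.
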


\begin{eg}
Consider the 2-dimensional cubical complex $X$ pictured in Figure \ref{fig:kb}. Let $G$ be the 1-skeleton of $X$. We have $X(G)=X$, which is homeomorphic to the Klein bottle, so Proposition \ref{prop:dfg} says that $A_1(G)\cong \langle a,b\mid aba=b\rangle$.

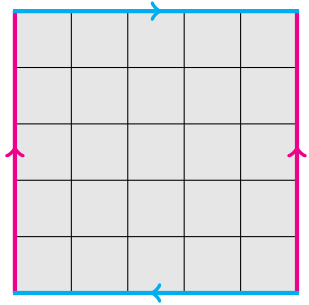
\begin{figure}[ht]
\begin{tikzpicture}[scale=0.75]
\def\a{0.035}
\draw[fill=black!10] (0,0) rectangle (5,5);
\foreach\x in {0,...,5} {
\draw (0,\x) -- (5,\x);
\draw (\x,0) -- (\x,5);
}
\draw[ultra thick,magenta,->] (0,0) -- (0,2.6);
\draw[ultra thick,magenta,->] (5,0) -- (5,2.6);
\draw[ultra thick,magenta] (0,2.5) -- (0,5);
\draw[ultra thick,magenta] (5,2.5) -- (5,5);
\draw[ultra thick,cyan,->] (-\a,5) -- (2.6,5);
\draw[ultra thick,cyan,->] (5+\a,0) -- (2.4,0);
\draw[ultra thick,cyan] (2.5,5) -- (5+\a,5);
\draw[ultra thick,cyan] (2.5,0) -- (-\a,0);
\end{tikzpicture}
\caption{A 2-dimensional cubical complex.}
\label{fig:kb}
\end{figure}
\label{eg:kb}
\end{eg}

\subsection{Discrete singular cubical homology}

Let $Q_n$ denote the \emph{$n$-cube graph}, defined by
\[Q_n=\begin{cases} I_0&\mbox{if }n=0\\ I_1^n&\mbox{if }n\geq 1.\end{cases}\]
For $n\geq 0$, let $\mathcal{L}_n(G)$ denote the free abelian group generated by all graph maps $f:Q_n\to G$. For $1\leq k\leq n$, let $D_k^\pm f:Q_{n-1}\to G$ be the graph maps given by
\begin{align*}
D_k^-f(i_1,\ldots,i_{n-1}) &= f(i_1,\ldots,i_{k-1},0,i_k,\ldots,i_n)\\
D_k^+f(i_1,\ldots,i_{n-1}) &= f(i_1,\ldots,i_{k-1},1,i_k,\ldots,i_n).
\end{align*}
A map $f$ is called \emph{degenerate} if $D_k^-f=D_k^+f$ for some $k$. By definition, no map $Q_0\to g$ is degenerate. Let $\mathcal{D}_n(G)$ denote the subgroup of $\mathcal{L}_n(G)$ generated by all degenerate maps, and let $\mathcal{C}_n(G)=\mathcal{L}_n(G)/\mathcal{D}_n(G)$. Define a graph map $\partial_n f$ by
\[\partial_n f = \sum_{k=1}^n (-1)^n (D_k^- f - D_k^+ f).\]
Extending linearly, we obtain a homomorphism $\partial_n : \mathcal{C}_n(G)\to \mathcal{C}_{n-1}(G)$ for each $n\geq 1$. It is routine to check that $(\mathcal{C}_\bullet,\partial_\bullet)$ is a chain complex.

\begin{mydef}
The \emph{$n$th discrete singular cubical homology group} of $G$ is the quotient $\mathcal{H}_n(G)=\operatorname{ker}\partial_n/\operatorname{im}\partial_{n+1}$.
\end{mydef}

\begin{prop}[{\cite[Theorem 4.1]{barcelo2014}}]
The first discrete singular cubical homology group $\hc_1(G)$ is isomorphic to the abelianization of the discrete fundamental group $A_1(G)$.
\label{prop:hur}
\end{prop}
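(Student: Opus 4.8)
The plan is to reproduce the classical $1$-dimensional Hurewicz argument in the combinatorial setting, by building an explicit homomorphism $\Phi\colon A_1(G)\to\hc_1(G)$ and showing that it descends to an isomorphism on the abelianization. Given a based loop $f\colon(\Z,\partial\Z)\to(G,v_0)$ supported on $\{-r_f,\dots,r_f\}$, each consecutive pair $(f(i),f(i+1))$ is the restriction of $f$ to a copy of $I_1=Q_1$, hence a generator of $\mathcal{L}_1(G)$, which is degenerate (zero in $\mathcal{C}_1(G)$) exactly when $f(i)=f(i+1)$. Set $c_f=\sum_i(f(i),f(i+1))\in\mathcal{C}_1(G)$. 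Since $\partial_1(u,v)=v-u$, the sum telescopes and $\partial_1 c_f=v_0-v_0=0$, so $c_f$ is a cycle; define $\Phi([f])=[c_f]$. That $\Phi$ is a homomorphism is immediate, since concatenating $f$ and $g$ lays their supporting paths end to end and gives $c_{f*g}=c_f+c_g$ in $\mathcal{C}_1(G)$.

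The first substantive step is well-definedness. A based homotopy $h\colon\Z\times I_m\to G$ from $f$ to $g$ restricts on each horizontal strip $\Z\times\{j,j+1\}$ to a family of squares $s_i\colon Q_2\to G$, namely $s_i(a,b)=h(i+a,j+b)$. Choosing $R$ larger than every $r_{h_j}$, I sum $s_{-R},\dots,s_{R-1}$: up to sign, $\partial_2$ contributes the bottom edges (summing to $c_{h_j}$), the top edges (summing to $c_{h_{j+1}}$), and the interior vertical edges, which telescope and cancel in pairs. The two surviving extreme vertical edges join $h_j(\pm R)=v_0$ to $h_{j+1}(\pm R)=v_0$, so they are degenerate and vanish in $\mathcal{C}_1(G)$. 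Hence $c_{h_{j+1}}-c_{h_j}\in\operatorname{im}\partial_2$, and summing over $j$ gives $[c_f]=[c_g]$. This is the discrete analog of subdividing a homotopy into unit squares; the only care needed is that collapsed squares contribute nothing and that the signs of $\partial_2$ match the orientations of the horizontal edges.

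Surjectivity and the kernel both run through a spanning tree $T$. For each vertex $w$ let $\gamma_w$ be the edge-path along $T$ from $v_0$ to $w$, and for an oriented edge $e=(u,w)$ let $\ell_e$ be the based loop $\gamma_u$, then $e$, then the reverse of $\gamma_w$. Sending $e\mapsto[\ell_e]$ and extending linearly gives a homomorphism $\psi\colon\mathcal{C}_1(G)\to A_1(G)^{\mathrm{ab}}$ (it vanishes on degenerate edges, whose loops are null). One checks $\psi(\operatorname{im}\partial_2)=0$, so $\psi$ descends to $\Psi\colon\hc_1(G)\to A_1(G)^{\mathrm{ab}}$. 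Since $\hc_1(G)$ is abelian, the commutator subgroup lies in $\ker\Phi$, so $\Phi$ induces $\bar\Phi$ on the abelianization, and a direct computation shows $\Psi$ is its two-sided inverse: $\bar\Phi\circ\Psi=\mathrm{id}$ because $c_{\ell_e}=e$ modulo edges of $T$, and for a cycle these tree corrections assemble into a cycle supported on $T$, which vanishes since $T$ is acyclic; while $\Psi\circ\bar\Phi=\mathrm{id}$ because the loops $\ell_{e_i}$ of the successive edges $e_i$ of $f$ concatenate, after cancelling adjacent tree paths, back to $f$.

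The main obstacle is verifying that $\psi$ annihilates $\operatorname{im}\partial_2$: this requires knowing that the loop around the $4$-cycle image of every square $Q_2\to G$ is null-homotopic in $G$. Here I would invoke Proposition \ref{prop:dfg}, which fills every $3$-cycle and chordless $4$-cycle: a square whose image is a chordless $4$-cycle bounds a $2$-cell directly, a $4$-cycle with a chord splits into two already-filled triangles, and a collapsed square maps to a path and is trivially null. An attractive alternative that packages all of this is to prove directly that $\hc_1(G)\cong H_1(X(G))$ by matching $\operatorname{im}\partial_2$ with the cellular $2$-boundaries of $X(G)$, and then combine Proposition \ref{prop:dfg} with the classical Hurewicz theorem; the work in that route is precisely showing that the subgroup of $1$-chains generated by all square-images coincides with the one generated by triangles and chordless squares.
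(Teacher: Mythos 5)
The paper offers no internal proof of this proposition to compare against: it is quoted directly from \cite[Theorem 4.1]{barcelo2014} and used as a black box in the proof of Theorem \ref{thm:symab}. Judged on its own merits, your argument is a sound reconstruction of the classical dimension-one Hurewicz proof in the discrete setting, and its key reduction is the right one: the only nontrivial point is that $\psi$ annihilates $\operatorname{im}\partial_2$, which you correctly reduce to the statement that the boundary walk of any graph map $Q_2\to G$ is nullhomotopic, and this is exactly what Proposition \ref{prop:dfg} supplies. Two caveats there: your case analysis should also include the case where the square's image is a triangle (two adjacent corners coincide), which is likewise filled in $X(G)$; and your unit-square decomposition of a homotopy tacitly uses that $\Z\times I_m$ is the Cartesian product, which is indeed the paper's convention (so each restriction to $\{i,i+1\}\times\{j,j+1\}$ really is a graph map $Q_2\to G$).

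One step, however, is misstated and needs repair. In proving $\bar\Phi\circ\Psi=\mathrm{id}$ you assert that the tree correction is ``a cycle supported on $T$, which vanishes since $T$ is acyclic.'' In $\mathcal{C}_1(G)$ the generators $(u,v)$ and $(v,u)$ are linearly independent, so a $1$-cycle supported on a tree need not be the zero chain---$(u,v)+(v,u)$ is a counterexample---and your correction term genuinely contains reversed tree edges, coming from the chains of the paths $\bar\gamma_{w}$. The correction vanishes only in homology, and to see this you need the relation $[(v,u)]=-[(u,v)]$ in $\hc_1(G)$. This is witnessed by the non-degenerate square $s:Q_2\to G$ with $s(0,0)=u$, $s(1,0)=v$ and $s(0,1)=s(1,1)=u$, whose boundary equals $\pm\left((u,v)+(v,u)\right)$ modulo degenerate chains. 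With that lemma in hand, every $1$-chain is homologous to one in which each edge of $G$ carries a single fixed orientation, the correction becomes a cellular $1$-cycle supported on $T$, and a leaf induction shows it is a boundary; the rest of your argument, including $\Psi\circ\bar\Phi=\mathrm{id}$ via cancellation of backtracking tree paths, then goes through.
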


\begin{eg}
Let $G$ be the graph from Example \ref{eg:kb}. Proposition \ref{prop:hur} gives $\hc_1(G)\cong \Z\oplus \Z_2$.
\end{eg}
\section{Reduced products and token graphs}
\label{sec:tok}

As in the introduction, let $\rp{G}{n}$ denote the quotient graph of $G^n$ under the action of $\Sigma_n$. The vertices of the \emph{$n$th reduced power} $\rp{G}{n}$ correspond to configurations of $n$ indistinguishable tokens placed on the vertices of $G$, with multiple tokens allowed on each vertex. Two such configurations are adjacent if and only if they differ by moving exactly one token to an adjacent vertex of $G$.

We represent each configuration of tokens by a monomial in the vertices of $G$, where the multiplicity of a vertex is the number of tokens on that vertex. For example, if the vertices of $G=K_3$ are labeled $u$, $v$ and $w$, then the monomial $u^2v$ corresponds to the configuration with 2 tokens on $u$, 1 token on $v$ and no tokens on $w$. Two monomials $\x$ and $\y$ of degree $n$ are adjacent as vertices of $\rp{G}{n}$ if and only if
\[\frac{\operatorname{lcm}(\x,\y)}{\operatorname{gcd}(\x,\y)} = uv\]
for adjacent vertices $u$ and $v$ of $G$.

The \emph{token graph} $\tok{G}{n}$ is the subgraph of $\rp{G}{n}$ induced by all configurations with at most one token at each vertex. Equivalently, $\tok{G}{n}$ is induced by the squarefree monomials. Clearly $\rp{G}{1}=\tok{G}{1}=G$, and $\tok{G}{n}$ is empty if $n>t$, where $t$ is the number of vertices of $G$. Moreover we have $\rp{G}{n}\cong \rp{G}{t-n+1}$ for all $n$. Since $G$ is connected, both $\rp{G}{n}$ and $\tok{G}{n}$ are connected as well \cite[Theorem 5]{fabmon2012}.

We regard a path in $G$ as a sequence $P=(p_0,\ldots,p_\ell)$ of vertices with $p_i\simeq p_{i+1}$ for all $i$. Thus $P$ is a \emph{cycle} if $p_0=p_\ell$. The \emph{length} of $P$ is defined to be $\ell$.

\begin{mydef}
A vertex $v\in G$ is \emph{essential} if $\deg v\neq 2$. We say that $G$ is \emph{sufficiently subdivided for $n$} if it satisfies the following conditions:
\begin{enumerate}
\item Every path between distinct essential vertices of $G$ has length at least $n-1$
\item Every cycle based at an essential vertex of $G$ that is not nullhomotopic (when regarding $G$ as a topological space) has length at least $n+1$.
\end{enumerate}
\end{mydef}

Since $G$ is simple, it is always sufficiently subdivided for 2. Clearly if $G$ is sufficiently subdivided for $n$, then it is sufficiently subdivided for all $m\leq n$. For any fixed $G$ and $n$, one can subdivide the edges of $G$ to obtain a graph that is homeomorphic to $G$ and sufficiently subdivided for $n$. This process, illustrated in Figure \ref{fig:subd}, does not affect the braid groups.

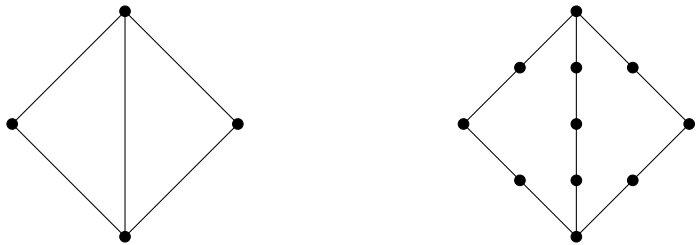
\begin{figure}[ht]
\centering
\begin{tikzpicture}
\def\a{1.5}
\def\b{-6}
\draw (\b,\a)--(\b+\a,0)--(\b,-\a)--(\b-\a,0)--cycle;
\draw (\b,-\a)--(\b,\a);
\draw[fill=black] (\b,-\a) circle (2pt);
\draw[fill=black] (\b,\a) circle (2pt);
\draw[fill=black] (\b+\a,0) circle (2pt);
\draw[fill=black] (\b-\a,0) circle (2pt);
\draw (0,\a)--(\a,0)--(0,-\a)--(-\a,0)--cycle;
\draw (0,-\a)--(0,\a);
\draw[fill=black] (0,-\a) circle (2pt);
\draw[fill=black] (0,\a) circle (2pt);
\draw[fill=black] (\a,0) circle (2pt);
\draw[fill=black] (-\a,0) circle (2pt);
\draw[fill=black] (0,\a/2) circle (2pt);
\draw[fill=black] (0,-\a/2) circle (2pt);
\draw[fill=black] (0,0) circle (2pt);
\draw[fill=black] (\a/2,\a/2) circle (2pt);
\draw[fill=black] (-\a/2,\a/2) circle (2pt);
\draw[fill=black] (-\a/2,-\a/2) circle (2pt);
\draw[fill=black] (\a/2,-\a/2) circle (2pt);
\end{tikzpicture}
\caption{A graph sufficiently subdivided for at most 2, left, and a homeomorphic graph sufficiently subdivided for at most 5, right.}
\label{fig:subd}
\end{figure}

\begin{eg}[Paths]
Consider the path graph $I_m$. Let $\Delta_{m,n}$ denote the subgraph of $\Z^m$ induced by the set
\[\{x\in \Z^m : 0\leq x_1\leq \cdots \leq x_m \leq n\}.\]
Thus $\Delta_{m,n}$ consists of the integer points in an $m$-simplex. We identify $\rp{I_m}{n}$ with $\Delta_{m,n}$ as follows. As described above, we regard the vertices of $\rp{I_m}{n}$ as monomials $\x$. For each $i\in I_m$, let $\x(i)$ denote the multiplicity of $i$ in $\x$. For $k=1,\ldots,m$, let $\phi_k : \rp{I_m}{n}\to \Z$ be given by
\[\phi_k(\x) = \sum_{i=m-k+1}^m \x(i).\]
Let $\phi: \rp{I_m}{n}\to \Delta_{m,n}$ be given by $\phi(\x)=(\phi_1(\x),\ldots,\phi_m(\x))$. It can be shown that $\phi$ is a graph isomorphism.

We can describe $\tok{I_m}{n}$ similarly. Let $\Gamma_{n,m}$ be the subgraph of $\Z^n$ induced by the set
\[\{x\in \Z^m : 0\leq x_1<\cdots < x_n\leq m\}.\]
The $n$-token configurations on $I_m$ with at most one token on each vertex can be identified with the strictly increasing functions $\{1,\ldots,n\}\to \{0,\ldots,m\}$, or equivalently with the vertices of $\Gamma_{n,m}$. This identification gives an isomorphism $\tok{I_m}{n}\cong \Gamma_{n,m}$. Additionally we have $\Delta_{m,n} + (0,\ldots,m-1)= \Gamma_{m,n+m-1}$, so in fact
\[\rp{I_m}{n}\cong \tok{I_{n+m-1}}{m}.\]
Using these descriptions and Proposition \ref{prop:dfg}, one can show that $A_1(I_m)$, $A_1(\rp{I_m}{n})$ and $A_1(\tok{I_m}{n})$ are trivial for all $n$. Since $G$ is sufficiently subdivided for $m$, Theorem \ref{thm:tokbraid} implies that the braid group $B_n(I_m)$ is trivial whenever $n\leq m$. It is not hard to see that $\uc{I_m}{n}$ is an $n$-simplex, so in fact $B_n(I_m)$ is trivial for all $n$.

\label{eg:spim}
\end{eg}

\begin{figure}[ht]
\begin{tikzpicture}
\def\a{2.5}
\draw (0,0) -- (0,1) -- (1,1);
\draw (\a+1,0) -- (\a+1,2) -- (\a+3,2);
\draw (\a+1,1) -- (\a+2,1) -- (\a+2,2);
\draw (2*\a+3,0) -- (2*\a+3,3) -- (2*\a+6,3);
\draw (2*\a+3,1) -- (2*\a+4,1) -- (2*\a+4,3);
\draw (2*\a+3,2) -- (2*\a+5,2) -- (2*\a+5,3);
\draw[fill=black] (0,0) circle (2pt);
\draw[fill=black] (0,1) circle (2pt);
\draw[fill=black] (1,1) circle (2pt);
\draw[fill=black] (\a+1,0) circle (2pt);
\draw[fill=black] (\a+1,2) circle (2pt);
\draw[fill=black] (\a+3,2) circle (2pt);
\draw[fill=black] (\a+1,1) circle (2pt);
\draw[fill=black] (\a+2,1) circle (2pt);
\draw[fill=black] (\a+2,2) circle (2pt);
\draw[fill=black] (2*\a+3,0) circle (2pt);
\draw[fill=black] (2*\a+3,3) circle (2pt);
\draw[fill=black] (2*\a+6,3) circle (2pt);
\draw[fill=black] (2*\a+3,1) circle (2pt);
\draw[fill=black] (2*\a+4,1) circle (2pt);
\draw[fill=black] (2*\a+4,3) circle (2pt);
\draw[fill=black] (2*\a+3,2) circle (2pt);
\draw[fill=black] (2*\a+5,2) circle (2pt);
\draw[fill=black] (2*\a+5,3) circle (2pt);
\draw[fill=black] (2*\a+4,2) circle (2pt);
\end{tikzpicture}
\caption{From left to right: the graphs $\rp{I_2}{1}\cong \tok{I_2}{2}$, $\rp{I_2}{2}\cong \tok{I_3}{2}$ and $\rp{I_2}{3}\cong \tok{I_4}{2}$.}
\end{figure}
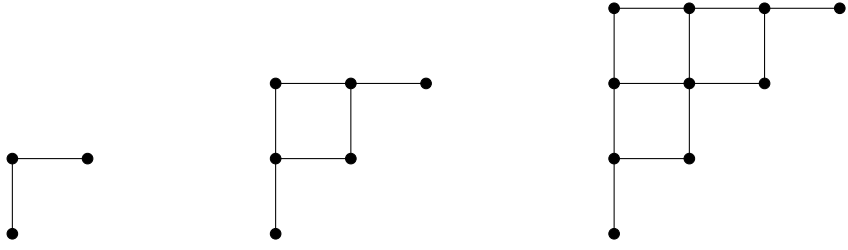

\begin{eg}
Let $G$ be the graph on the left of Figure \ref{fig:3gra}. Proposition \ref{prop:dfg} implies that $A_1(G)\cong F_2$, where $F_n$ is the free group of rank $n$. Theorem \ref{thm:symab} then gives $A_1(\rp{G}{n})\cong \Z^2$ for all $n\geq 2$. With the help of \texttt{SageMath} \cite{sagemath}, we can compute
\[A_1(\tok{G}{n})\cong \begin{cases} \Z*(F_{n-1}\oplus \Z)&\mbox{if }1\leq n\leq 4\\ F_4\oplus \Z&\mbox{if }n=5\\ \Z*(F_{9-n}\oplus \Z)&\mbox{if }6\leq n\leq 9,\end{cases}\]
where $*$ denotes the free product. For $n\geq 3$, we should not expect these to resemble the braid groups $B_n(G)$, since $G$ is not sufficiently subdivided for these values of $n$.
\end{eg}

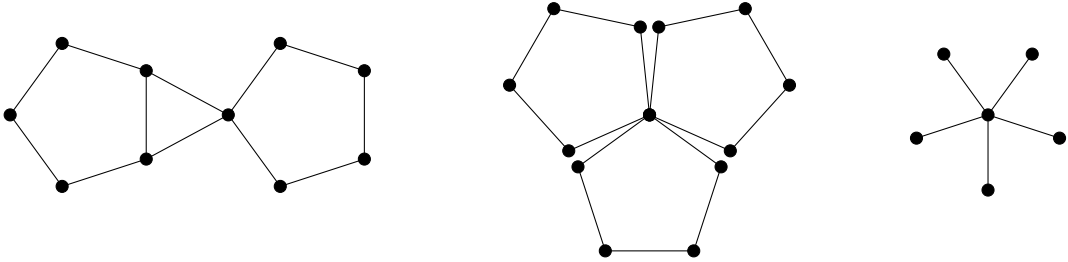
\begin{figure}[ht]
\begin{tikzpicture}
\def\a{2.9}
\def\b{7.5}
\def\c{4.5}
\def\r{0.08}
\foreach\x in {0,...,4}{
\draw[fill=black] (36+72*\x:1) circle (\r);
\draw (36+72*\x:1) -- (108+72*\x:1);
\draw[fill=black] ($(\a,0)+(36+72*\x:1)$) circle (\r);
\draw ($(\a,0)+(36+72*\x:1)$) -- ($(\a,0)+(108+72*\x:1)$);
}
\draw (36:1) -- ($(\a,0)+(180:1)$) -- (324:1);

\coordinate (a) at ($(\b,0)+(30:1)$);
\coordinate (b) at ($(\b,0)+(150:1)$);
\coordinate (c) at ($(\b,0)+(270:1)$);

\foreach\x in {0,...,4}{
\draw[fill=black] ($(a)+(210+72*\x:1)$) circle (\r);
\draw ($(a)+(210+72*\x:1)$) -- ($(a)+(282+72*\x:1)$);
\draw[fill=black] ($(b)+(330+72*\x:1)$) circle (\r);
\draw ($(b)+(330+72*\x:1)$) -- ($(b)+(42+72*\x:1)$);
\draw[fill=black] ($(c)+(90+72*\x:1)$) circle (\r);
\draw ($(c)+(90+72*\x:1)$) -- ($(c)+(162+72*\x:1)$);
}

\coordinate (d) at (\b+\c,0);
\draw[fill=black] (d) circle (\r);
\foreach\x in {0,...,4}{
\draw (d) -- ($(d)+(270+72*\x:1)$);
\draw[fill=black] ($(d)+(270+72*\x:1)$) circle (\r);
}
\end{tikzpicture}
\caption{Three graphs.}
\label{fig:3gra}
\end{figure}

\begin{eg}[Bouquets of cycles]
Let $G$ be the graph obtained by identifying a single vertex on each of $k$ disjoint $m$-cycle graphs, where $m\geq 5$. In other words, $G$ is a ``wedge sum'' of $m$-cycles. The case $k=3$ and $m=5$ is illustrated in the center of Figure \ref{fig:3gra}. Proposition \ref{prop:dfg} implies that $A_1(G)\cong F_k$, so Theorem \ref{thm:symab} gives $A_1(\rp{G}{n})\cong \Z^k$ for all $n\geq 2$. Theorem \ref{thm:tokbraid} implies that $A_1(\tok{G}{n})\cong B_n(G)$ for all $n<m$. It is shown in \cite[Proposition 3.4]{kallel2016} that $B_2(G)$ is a free group of rank
\begin{equation}
3\binom{k}{2}+1.
\label{eq:2rk}
\end{equation}
For all $n$, the general result \cite[Theorem 3.16]{ko2012} implies that the first singular homology group $H_1(\uc{G}{n})$ is a free abelian group of rank
\begin{equation}
(2n-1)\binom{n+k-2}{n}+1.
\label{eq:nrk}
\end{equation}
This agrees with \eqref{eq:2rk} in the case $n=2$, since $G$ is sufficiently subdivided for 2. Computations suggest that $B_n(G)$ is a free group of rank \eqref{eq:nrk} whenever $n<m$, but we have not found a proof of this in the literature.
\end{eg}

\begin{eg}[Stars]
Let $S_m$ denote the star graph on $m+1$ vertices. For example, $S_5$ is pictured on the right of Figure \ref{fig:3gra}. Let $\Theta_{m,n}$ be the subgraph of $\Z^m$ induced by the vertex set
\[\{x\in \Z^m : x_1+\cdots + x_m\leq n\mbox{ and }x_i\geq 0\mbox{ for all } i\}.\]
This is the set of integer points of an $m$-simplex. Label the internal node of $S_m$ $0$ and the leaves $1,\ldots,m$. Regarding the vertices of $\rp{S_m}{n}$ as monomials $\x$, let $\x(i)$ denote the multiplicity of each $i\in S_m$ in $\x$. Let $\psi: \rp{S_m}{n}\to \Theta_{m,n}$ be given by $\psi(\x)=(\x(1),\ldots,\x(m))$. It is routine to check that $\psi$ is a graph isomorphism. Under this isomorphism, the token graph $\tok{S_m}{n}$ is the subgraph of the $m$-cube $Q_m$ induced by all vertices whose coordinates sum to $n-1$ or $n$. It follows that $\tok{S_m}{n}$ is a $(m,n)$-biregular graph on $\binom{m+1}{n}$ vertices.

Proposition \ref{prop:dfg} and Theorem \ref{thm:symab} imply that $A_1(S_m)$ and $A_1(\rp{S_m}{n})$ are trivial. However, $A_1(\tok{S_m}{n})$ is often nontrivial. Using ideas from Section \ref{sec:tokbraid}, it is not hard to show that $A_1(\tok{S_m}{n})$ is free. We propose the following formula for its rank, which we have checked for $m\leq 11$ and $n\leq 7$ using \texttt{SageMath} \cite{sagemath}:

\begin{conj}
The free group $A_1(\tok{S_m}{n})$ is of rank
\begin{equation}
(n-1)\binom{m}{n} - \binom{m}{n-1} + 1.
\label{eq:starconj}
\end{equation}
\label{conj:star}
\end{conj}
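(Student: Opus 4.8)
The plan is to prove Conjecture~\ref{conj:star} outright by showing that $\tok{S_m}{n}$ contains no $3$-cycles and no $4$-cycles at all, so that the complex $X(\tok{S_m}{n})$ of Proposition~\ref{prop:dfg} collapses to the graph itself; the rank of $A_1$ then equals the cycle rank of the graph, which is an elementary count. I would start from the identification in the Stars example of $\tok{S_m}{n}$ with the subgraph of $Q_m$ induced by the two Hamming levels $n-1$ and $n$. Every edge of $Q_m$ changes a single coordinate and hence alters the coordinate sum by one, so this graph is bipartite with parts the two levels; in particular it has no odd cycles, and therefore no $3$-cycles.

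The crux is a short lemma: every $4$-cycle in an induced subgraph of $Q_m$ is a $2$-face of $Q_m$. Traversing a $4$-cycle $v_0v_1v_2v_3v_0$ flips coordinates $i,j,k,l$ in succession and returns to the start, so $e_i+e_j+e_k+e_l=0$ in $(\Z/2)^m$; the distinctness constraints $i\neq j$, $j\neq k$, $k\neq l$, $l\neq i$ (needed for four genuinely distinct vertices) force the multiset $\{i,j,k,l\}$ to be two pairs arranged alternately, i.e.\ $i=k$ and $j=l$. Thus exactly two coordinates vary, so the cycle is a $2$-face. But the four vertices of a $2$-face occupy three consecutive Hamming levels, whereas $\tok{S_m}{n}$ meets only levels $n-1$ and $n$; hence it has no $4$-cycles, a fortiori no chordless ones. (Equivalently: if distinct $(n-1)$-subsets $A_1,A_2$ of $\{1,\dots,m\}$ both lie in distinct $n$-subsets $B_1,B_2$, then $n\leq|A_1\cup A_2|\leq|B_1\cap B_2|\leq n-1$, a contradiction.)

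Since $\tok{S_m}{n}$ has no $3$-cycles and no chordless $4$-cycles, Proposition~\ref{prop:dfg} gives $X(\tok{S_m}{n})=\tok{S_m}{n}$, so $A_1(\tok{S_m}{n})\cong\pi_1(\tok{S_m}{n})$ is the fundamental group of a connected graph and is therefore free, of rank equal to the cycle rank $E-V+1$. It remains to count. The vertices are the subsets of $\{1,\dots,m\}$ of size $n-1$ or $n$, so $V=\binom{m}{n-1}+\binom{m}{n}$; and since each of the $\binom{m}{n}$ top-level vertices has degree $n$ (one edge for each element it contains) and every edge has a unique top-level endpoint, $E=n\binom{m}{n}$. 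Combining the two $\binom{m}{n}$ terms gives
\[
E-V+1=(n-1)\binom{m}{n}-\binom{m}{n-1}+1,
\]
which is exactly formula~\eqref{eq:starconj}.

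The main (and essentially only) obstacle is the $4$-cycle lemma; once the girth is known to exceed $4$, the freeness and the rank both drop out of a routine Euler-characteristic computation. I would therefore concentrate the care there—verifying that a $4$-cycle in a cube subgraph varies precisely two coordinates, and that such a $2$-face cannot be squeezed into two adjacent levels—after which Conjecture~\ref{conj:star} is upgraded to a theorem.
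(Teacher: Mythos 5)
Your proposal is correct, and the comparison here is unusual: the paper does not prove this statement at all. Conjecture~\ref{conj:star} is offered only with \texttt{SageMath} evidence for $m\leq 11$, $n\leq 7$, together with the unproved remark that $A_1(\tok{S_m}{n})$ is free, so your argument, written out, would upgrade the conjecture to a theorem. The steps all check. The identification of $\tok{S_m}{n}$ with the subgraph of $Q_m$ induced by Hamming levels $n-1$ and $n$ is exactly the paper's Stars example; bipartiteness rules out $3$-cycles; and your parenthetical set-theoretic argument disposes of $4$-cycles cleanly, since a $4$-cycle must alternate levels, producing distinct $(n-1)$-sets $A_1,A_2$ and distinct $n$-sets $B_1,B_2$ with $A_i\subset B_j$ for all $i,j$, whence $n\leq\lvert A_1\cup A_2\rvert\leq\lvert B_1\cap B_2\rvert\leq n-1$ (the cube-face lemma is also correct but is subsumed by this). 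Since $\tok{S_m}{n}$ is connected (Section~\ref{sec:tok}) and has no $3$-cycles and no chordless $4$-cycles, Proposition~\ref{prop:dfg} gives $A_1(\tok{S_m}{n})\cong\pi_1(\tok{S_m}{n})$, free of rank $E-V+1$, and your counts $V=\binom{m}{n-1}+\binom{m}{n}$, $E=n\binom{m}{n}$ yield exactly \eqref{eq:starconj}; the degenerate cases $n=1$ and $n=m+1$ agree as well. Two further points. First, the girth bound could also be extracted from the paper's own machinery, since by the proof of Proposition~\ref{prop:skel} the $4$-cycles of $\tok{G}{n}$ for triangle- and square-free $G$ are Cartesian squares built on two \emph{disjoint} edges of $G$, and a star has no two disjoint edges; this is presumably what underlies the paper's freeness remark. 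However, that proposition sits under the standing hypothesis of Section~\ref{sec:tokbraid} that $G$ be sufficiently subdivided for $n$, which $S_m$ fails for $n\geq 3$, so your self-contained argument is the safer route. Second, your degree count quietly corrects a slip in the paper: $\tok{S_m}{n}$ is $(m-n+1,n)$-biregular rather than $(m,n)$-biregular, though this does not affect your edge count, which uses only the level-$n$ side.
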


It is proven in \cite[Corollary 4.2]{farley2005} that $B_n(S_m)$ is a free group of rank\footnote{A much larger value for $\operatorname{rk} B_n(S_m)$ appears in \cite[Proposition 4.1]{ghrist2001}. Computations support \eqref{eq:starko}.}
\begin{equation}
(m-2)\binom{n+m-2}{n-1} - \binom{n+m-2}{n} + 1.
\label{eq:starko}
\end{equation}
When $n=2$, this formula agrees with \eqref{eq:starconj}, since $S_m$ is sufficiently subdivided for 2.
However, $S_m$ is not sufficiently subdivided for $n\geq 3$, and the formulas differ in these cases.
\end{eg}
\section{Proof of Theorem \ref{thm:symab}}
\label{sec:symab}

Order the vertices of $G$ as $(v_0,v_1,\ldots,v_m)$. Let $C_1(G)$ denote the free abelian group with basis $\{v_iv_j : i<j\mbox{ and }v_i\sim v_j\}$. In other words, $C_1(G)$ is the group of 1-chains of $G$ if we regard $G$ as a 1-complex. Given vertices $v_i\simeq v_j$ of $G$, define a 1-chain $[v_i,v_j]$ by
\[[v_i,v_j] = \begin{cases} v_iv_j&\mbox{if }i<j\\ -v_jv_i&\mbox{if }i>j\\ 0&\mbox{if }i=j.\end{cases}\]
Given a path $P=(p_0,\ldots,p_\ell)$ in $G$, we write
\[[P] = \sum_{i=0}^{\ell-1} [p_i,p_{i+1}].\]
In this notation, $P$ is a cycle when $p_0=p_\ell$. Let $H_1(G)$ denote the subgroup of $C_1(G)$ generated by $\{[C] : C\mbox{ is a cycle of }G\}$. This is the first simplicial homology group of $G$.

Let $\x\in \rp{G}{n-1}$, where we think of $\rp{G}{0}$ as consisting only of the identity monomial $1$. Let $G\x$ denote the subgraph of $\rp{G}{n}$ induced by all monomials divisible by $\x$. It is not hard to see that $G\x$ is isomorphic to $G$. The following proposition will play an important role in proving Theorem \ref{thm:symab}.

\begin{prop}
Let $n\geq 2$. There is a set $S$ of 4-cycles of $\rp{G}{n}$ such that for any $\x\in \rp{G}{n-1}$ we have
\[H_1(\rp{G}{n}) = H_1(G\x)\oplus \mathcal{S},\]
where $\mathcal{S}$ is generated by $\{[C] : C\in S\}$.
\label{prop:hombasis}
\end{prop}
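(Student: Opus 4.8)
The plan is to deduce the statement from the explicit cycle basis of $\rp{G}{n}$ in \cite{hammack2016}, and then to show that the ``square'' part of that basis furnishes a complement to $H_1(G\x)$ that can be chosen independently of $\x$. First I would recall that, since $\rp{G}{n}$ is a graph, $H_1(\rp{G}{n})$ is exactly its cycle space, a free abelian group. The basis of \cite{hammack2016} is built from two kinds of generators: \emph{moving cycles}, obtained by sending a single token around a fundamental cycle of $G$ (relative to a fixed spanning tree) while the remaining $n-1$ tokens rest at a fixed base configuration, and \emph{square cycles}, the $4$-cycles $(\y uu',\, \y vu',\, \y vv',\, \y uv')$ traced out when two distinct tokens traverse edges $uv$ and $u'v'$ of $G$ and $\y$ is a monomial of degree $n-2$ (this is where $n\geq 2$ is needed). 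I would take $S$ to be the set of all square cycles, so that $\mathcal{S}$ is the subgroup generated by every square. The form of \cite{hammack2016} that I need is the direct sum decomposition of $H_1(\rp{G}{n})$ into $\mathcal{S}$ and a complementary subgroup spanned by moving cycles at one base configuration $\x_0$; under the isomorphism $G\x_0\cong G$ the latter subgroup is exactly $H_1(G\x_0)$. This settles the case $\x=\x_0$.

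The crux is to pass from $\x_0$ to an arbitrary $\x$. The key lemma is a prism identity: if $\x=\y c$ and $\x'=\y d$ differ by moving one spectator token along an edge $cd$ of $G$, then for every cycle $C=(p_0,\ldots,p_\ell)$ of $G$, the corresponding moving cycles $[\x\,C]$ and $[\x'\,C]$ in $G\x$ and $G\x'$ satisfy
\[
[\x\,C] - [\x'\,C] = \sum_{i=0}^{\ell-1}\sigma_i \in \mathcal{S},
\]
where $\sigma_i$ is the square built from the edges $p_ip_{i+1}$ and $cd$ with spectator monomial $\y$. This is a chain-level computation: writing $A_i=\y c p_i$ and $B_i=\y d p_i$, the vertical edges $[A_i,B_i]$ telescope and cancel, leaving exactly the top circuit minus the bottom circuit. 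Degenerate squares — those in which $cd$ coincides with some $p_ip_{i+1}$ — contribute the zero chain and cause no trouble, so the identity holds verbatim. Since $\rp{G}{n-1}$ is connected, iterating the identity along a path of elementary token moves shows $[\x\,C]\equiv[\x'\,C]\pmod{\mathcal{S}}$ for all configurations $\x,\x'$ and all cycles $C$. Because $H_1(G\x)$ is generated by the classes $[\x\,C]$, this yields $H_1(G\x)+\mathcal{S}=H_1(G\x_0)+\mathcal{S}=H_1(\rp{G}{n})$ for every $\x$.

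Finally I would upgrade spanning to a direct sum by a rank count. For every $\x$ we have $\operatorname{rank}H_1(G\x)=\operatorname{rank}H_1(G)$ since $G\x\cong G$, and the settled case $\x=\x_0$ gives $\operatorname{rank}H_1(\rp{G}{n})=\operatorname{rank}H_1(G)+\operatorname{rank}\mathcal{S}$. The addition map $H_1(G\x)\oplus\mathcal{S}\to H_1(\rp{G}{n})$ is therefore a surjection of free abelian groups of equal finite rank, hence an isomorphism, which is precisely the internal direct sum $H_1(\rp{G}{n})=H_1(G\x)\oplus\mathcal{S}$. The main obstacle is the $\x$-independence encoded in the prism lemma: identifying the right local-to-global move and verifying that sliding a spectator token alters a moving cycle only by a sum of squares. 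Once that chain identity is in hand, the connectivity and rank-counting steps are formal.
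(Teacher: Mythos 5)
Your prism identity and the connectivity iteration are correct---sliding a spectator token along an edge $cd$ changes a moving cycle by a telescoping sum of Cartesian squares, and degenerate squares do contribute the zero chain---but the proposal has a genuine gap at its foundation: the appeal to \cite{hammack2016} for an \emph{integral} direct sum decomposition $H_1(\rp{G}{n})=H_1(G\x_0)\oplus\mathcal{S}$ at a base configuration. What \cite{hammack2016} proves is a statement about the cycle space over $\Z_2$, i.e.\ it exhibits a basis of $H_1(\rp{G}{n};\Z_2)$. A family of integral cycles that reduces to a basis mod $2$ is only guaranteed to generate a sublattice of odd index in $H_1(\rp{G}{n};\Z)$, not the whole group, so the coefficient upgrade from $\Z_2$ to $\Z$ is not a formality---it is precisely the content of Proposition \ref{prop:hombasis}. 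The paper fills this gap by redoing the arguments of \cite{hammack2016} over $\Z$: the key new ingredient is Lemma \ref{lem:hammack}, an integral version of \cite[Lemma 1]{hammack2016} showing that $\eta^*\colon H_1(G^n)\to H_1(\rp{G}{n})$ is surjective (its proof must track signs, which the $\Z_2$ original never sees). With that lemma one obtains a homomorphism $\phi\colon H_1(\rp{G}{n})\to H_1(G)$, induced by sending each edge of $\rp{G}{n}$ to the edge of $G$ along which the single token moves, whose restriction to $H_1(G\x)$ is an isomorphism for \emph{every} $\x$; this yields $H_1(\rp{G}{n})=H_1(G\x)\oplus\ker\phi$ in one stroke, and $\ker\phi=\mathcal{S}$ follows from \cite[Proposition 5]{hammack2016}, whose arguments do carry over to $\Z$ unchanged. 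In short, the base case you take as a citation is the actual theorem.

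A secondary problem is your finishing rank count. The paper's standing conventions allow infinite, locally finite graphs (this is why local finiteness is assumed at all), and for such $G$ the groups $H_1(G)$ and $\mathcal{S}$ can have infinite rank; a surjection of free abelian groups of equal infinite rank need not be injective, so the step from spanning to internal direct sum breaks down. The retraction $\phi$ makes this step unnecessary: each Cartesian square maps under $\phi$ to $[c,d]+[a,b]+[d,c]+[b,a]=0$, so $\mathcal{S}\subseteq\ker\phi$, and $H_1(G\x)\cap\mathcal{S}=0$ follows from injectivity of $\phi$ on $H_1(G\x)$ with no finiteness hypothesis. Your basepoint-moving lemma is a nice observation, and it genuinely differs from the paper's route (which never needs to move the basepoint, since $\phi$ splits off $H_1(G\x)$ for all $\x$ simultaneously); but as written the proposal assumes the hard part of the statement and its last step needs $G$ finite.
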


Proposition \ref{prop:hombasis} is an integral version of \cite[Theorem 1]{hammack2016}, which describes a \emph{cycle basis} of $\rp{G}{n}$, i.e. a basis of $H_1(\rp{G}{n};\Z_2)$. It turns out that this is also a basis of $H_1(\rp{G}{n})=H_1(\rp{G}{n};\Z)$. Most of the relevant arguments from \cite{hammack2016} apply in the integral case without any changes. The lone exception is \cite[Lemma 1]{hammack2016}, which we adapt as Lemma \ref{lem:hammack} below.

We can construct generating sets of $H_1(G\x)$ and $\mathcal{S}$ as follows. Let $T$ be a spanning tree of $G$. For each edge $e$ of $G$ not in $T$, choose a cycle $C_e$ such that $C_e$ contains $e$, and every edge of $C_e\setminus e$ is contained in $T$. It is well known that the 1-chains $[C_e]$ form a basis for $H_1(G)$. Any cycle $C=(p_0,\ldots,p_\ell)$ of $G$ corresponds to a cycle $C\x$ in $G\x$ given by $C\x = (p_0\x,\ldots,p_\ell\x)$. Clearly the 1-chains $[C_e\x]$ form a basis for $H_1(G\x)$.

We can take the set $S$ in Proposition \ref{prop:hombasis} to be the set of \emph{Cartesian squares} of $\rp{G}{n}$, which we now define.

\begin{mydef}
Let $n\geq 2$. Given distinct edges $ab$ and $cd$ of $G$ and a monomial $\x\in \rp{G}{n-2}$, there is a simple 4-cycle $(ab\,\square\,cd)\x$ of $\rp{G}{n}$ given by
\[(ab\,\square\,cd)\x=(ac\x,ad\x,bd\x,bc\x,ac\x).\]
A 4-cycle of this form is called a \emph{Cartesian square} of $\rp{G}{n}$.
\label{mydef:cartsq}
\end{mydef}

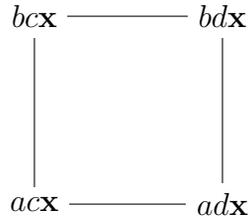
\begin{figure}[ht]
\begin{tikzpicture}[scale=2.5]
\draw (0,0) node[fill=white] {$ac\x$} -- (1,0) node[fill=white] {$ad\x$} -- (1,1) node[fill=white] {$bd\x$} -- (0,1) node[fill=white] {$bc\x$} --cycle;
\end{tikzpicture}
\caption{A Cartesian square of $\rp{G}{n}$.}
\end{figure}

Any graph map $f:G\to H$ induces a homomorphism $f^*:C_1(G)\to C_1(H)$ by setting $f^*([(u,v)])=[(f(u),f(v))]$ and extending linearly. If $C=(v_0,\ldots,v_k)$ is a cycle of $G$, then clearly $f(C)=(f(v_0),\ldots,f(v_k))$ is a cycle of $H$. It follows that $f^*$ restricts to a homomorphism $ H_1(G)\to H_1(H)$ given by $f^*([C])=[f(C)]$.

\begin{lemma}
Let $\eta:G^n \to \rp{G}{n}$ be the graph map given by
\[\eta(v_1,\ldots,v_n) = \prod_{i=1}^n v_i.\]
The induced homomorphism $\eta^*:  H_1(G^n)\to  H_1(\rp{G}{n})$ is surjective.

\begin{proof}
We first claim that if two vertices of $G^n$ differ by a permutation of their coordinates, then there is a path in $G^n$ between these two vertices whose image under $\eta^*\circ [\cdot]$ is 0. Let $a$ and $b$ be vertices of $G$. Consider an arbitrary vertex of $G^n$ that contains $a$ and $b$ among its coordinates; write this arbitrary vertex as $(\ldots,a,\ldots,b,\ldots)$. Let $P=(p_0,\ldots,p_\ell)$ be a path in $G$ from $a$ to $b$. Let $Q$ be the path in $G^n$ given by
\[Q=((\ldots,p_0,\ldots,b,\ldots),(\ldots,p_1,\ldots,b,\ldots),\ldots,(\ldots,p_\ell,\ldots,b,\ldots)).\]
Let $R$ be the path in $G^n$ given by
\[R=((\ldots,a,\ldots,p_\ell,\ldots),(\ldots,a,\ldots,p_{\ell-1},\ldots),\ldots,(\ldots,a,\ldots,p_0,\ldots)).\]
The concatenation $Q+R$ is a path in $G^n$ from $(\ldots,a,\ldots,b,\ldots)$ to $(\ldots,b,\ldots,a,\ldots)$. Moreover, we have 
\begin{align*}
[Q+R] &= [Q] + [(\ldots,b,\ldots,b,\ldots),(\ldots,b,\ldots,b,\ldots)] + [R]\\
&= [Q]+[R].
\end{align*}
Since $\eta(Q)$ is the reverse of $\eta(R)$, we have $\eta^*([Q]) = -\eta^*([R])$, so $\eta^*([Q+R])=0$. Thus the claim holds for transpositions of coordinates. Since any permutation is a product of transpositions, the claim follows.

Let $C=(\x_0,\ldots,\x_k)$ be a cycle of $\rp{G}{n}$. We construct a cycle $Z$ of $G^n$ such that $\eta^*([Z])=[C]$. For each $i=0,\ldots,k-1$ let $(\u_i,\v_{i+1})$ be a pair of adjacent vertices of $G^n$ such that $f^*([(\u_i,\v_{i+1})]) = [(\x_i,\x_{i+1})]$. For $i=0,\ldots,k-1$, the vertices $\u_i$ and $\v_i$ differ by a permutation of their coordinates, so by the claim above, there is a path $P_i$ from $\v_i$ to $\u_i$ such that $\eta^*([P_i])=0$. Let $Z$ be the following cycle of $G^n$:
\[Z = (\u_0,\v_1) + P_1 + (\u_1,\v_2) + P_2 + \cdots + (\u_{k-2},\v_{k-1}) + P_{k-1} + (\u_{k-1},\v_k).\]
We have $\eta^*([Z])=[C]$, as desired.
\end{proof}
\label{lem:hammack}
\end{lemma}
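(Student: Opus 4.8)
The plan is to reduce the surjectivity of $\eta^*$ to a lifting problem for individual cycles. Since $H_1(\rp{G}{n})$ is generated by the classes $[C]$ of cycles $C$ in $\rp{G}{n}$, it is enough to show that each such class lies in the image of $\eta^*$: given a cycle $C=(\x_0,\ldots,\x_k)$, I want to build a $1$-cycle $Z$ in $G^n$ with $\eta^*([Z])=[C]$.

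First I would lift $C$ edge by edge. Every edge of $\rp{G}{n}$ is recorded by moving a single token along an edge of $G$, so each edge $(\x_i,\x_{i+1})$ admits a lift: choosing any preimage $\u_i$ of $\x_i$ and altering the coordinate carrying the moving token produces an adjacent vertex $\v_{i+1}$ of $G^n$ with $\eta(\v_{i+1})=\x_{i+1}$, so that $\eta$ carries the edge $(\u_i,\v_{i+1})$ onto $(\x_i,\x_{i+1})$. The obstruction to stitching these lifted edges into a cycle is that the tail $\u_i$ of one edge and the head $\v_i$ of the previous one satisfy only $\eta(\u_i)=\eta(\v_i)=\x_i$, hence agree just up to a permutation of coordinates.

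The crux is therefore the claim that if $\u$ and $\v$ are vertices of $G^n$ differing by a permutation of coordinates, then there is a path in $G^n$ from $\v$ to $\u$ whose image under $\eta^*\circ[\cdot]$ is $0$. As every permutation is a product of transpositions, I would reduce to a single transposition interchanging two coordinates, say one carrying $a$ and one carrying $b$. Fixing a path $P$ from $a$ to $b$ in $G$, I would build the correcting path in two stages: first drag the token at $a$ along $P$ to $b$ while its partner rests at $b$, then drag one token from $b$ back to $a$ along $P$ while its partner rests at $b$. The point is that $\eta$ sends these two stages to mutually reverse paths in $\rp{G}{n}$ (each merely carries one token along $P$, with a second token parked at $b$), so their contributions to $\eta^*\circ[\cdot]$ cancel; the only shared vertex, where both tokens sit at $b$, contributes a degenerate loop and so nothing.

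With the claim established, the assembly is routine: interleaving the lifted edges with correcting paths $P_i$ from $\v_i$ to $\u_i$ (including one final correcting path to close up, using $\x_k=\x_0$) yields a genuine cycle $Z$ of $G^n$. Because each $\eta^*([P_i])=0$ while $\eta$ carries the remaining edges onto those of $C$, linearity gives $\eta^*([Z])=[C]$. I expect the transposition claim to be the main obstacle—specifically, arranging the two dragging stages so that their $\eta$-images are exactly reverses of one another, which is what forces the homological cancellation.
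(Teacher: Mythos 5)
Your proposal is correct and follows essentially the same route as the paper's proof: lift the cycle of $\rp{G}{n}$ edge by edge to $G^n$, then splice the lifts together using the key claim that vertices of $G^n$ differing by a permutation of coordinates (reduced to a single transposition) are joined by a path whose image under $\eta^*\circ[\cdot]$ vanishes, proved via the same two-stage dragging construction whose $\eta$-images are mutually reverse paths meeting only in the degenerate vertex where both tokens sit at $b$. If anything, you are slightly more careful than the paper: its displayed formula for $Z$ omits the final correcting path needed to close the cycle, and its path $R$ contains a typo (the resting coordinate should be $b$, not $a$), both of which your write-up gets right.
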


\begin{proof}[Proof of Proposition \ref{prop:hombasis}]
We construct a homomorphism $\phi:H_1(\rp{G}{n})\to H_1(G)$, following \cite{hammack2016}. Let $\x$ and $\y$ be adjacent vertices of $\rp{G}{n}$, so that $\x/\y=u/v$ for some $u,v\in G$. Let $\phi([\x,\y]) = [u,v]$ for any such $\x$ and $\y$, and extend linearly to obtain a homomorphism $C_1(\rp{G}{n})\to C_1(G)$. Moreover, if $p_i:G^n\to G$ denotes the projection onto the $i$th coordinate, then $\phi\circ \eta^* = \sum_{i=1}^n p_i^*$.

Let $X\in H_1(\rp{G}{n})$. Lemma \ref{lem:hammack} gives $Y\in H_1(\rp{G}{n})$ such that $\eta^*(Y)=X$, so we have $\phi(X)=\phi(\eta^*(Y)) = \sum_{i=1}^n p_i^*(Y)\in H_1(G)$. Thus $\phi$ is a function $H_1(\rp{G}{n})\to H_1(G)$. In particular, for any $\x\in \rp{G}{n-1}$ the restriction $H_1(G\x)\to H_1(G)$ is an isomorphism, so
\[H_1(\rp{G}{n}) = H_1(G\x)\oplus \operatorname{ker} \phi.\]
It follows from \cite[Proposition 5]{hammack2016} that $\operatorname{ker} \phi$ is generated by the set of $[C]$ for all Cartesian squares $C$ of $\rp{G}{n}$. The arguments used do not rely on $\Z_2$ and hold over $\Z$ as well. The result follows.
\end{proof}

\begin{lemma}
For any graphs $G$ and $H$, we have $A_1(G\times H)\cong A_1(G)\times A_1(H)$.
\begin{proof}
Let $(v_0,w_0)\in G\times H$. A function $f:\Z\to G\times H$ is a graph map $f:(\Z,\partial \Z)\to (G\times H,(v_0,w_0))$ if and only if, writing $f(i)=(f_G(i),f_H(i))$, the functions $f_G$ and $f_H$ are graph maps $f_G:(\Z,\partial \Z)\to (G,v_0)$ and $f_H:(\Z,\partial \Z)\to (H,w_0)$. Similarly, a based homotopy $h$ from $f$ to $g$ induces based homotopies $h_G$ from $f_G$ to $g_G$ and $h_H$ from $f_H$ to $g_H$.

Let $f':(\Z,\partial\Z)\to (G\times H,(v_0,w_0))$ be another graph map, and let $\phi_1:\Z\times I_{m_1}$ and $\phi_2:\Z\times I_{m_2}$ be based homotopies from $f_G$ to $f_G'$ and from $f_H$ to $f_H'$, respectively. We show that these induce a based homotopy from $f$ to $f'$. We cannot simply take the function $(\phi_1,\phi_2)$, since this is not necessarily a graph map. Instead, define $\phi:\Z\times I_{m_1+m_2}\to G\times H$ as follows:
\[\phi(i,j) = \begin{cases} (\phi_1(i,j),f_H(i))&\mbox{if }0\leq j\leq m_1\\ (f_G'(i),\phi_2(i,j-m_1))&\mbox{if }m_1\leq j\leq m_1+m_2.\end{cases}\]
It is routine to check that $\phi$ is a based homotopy from $f$ to $f'$. We therefore obtain a bijection from $A_1(G\times H)$ to $A_1(G)\times A_1(H)$. This bijection is clearly a homomorphism, and hence an isomorphism.
\end{proof}
\label{lem:prod}
\end{lemma}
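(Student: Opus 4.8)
The plan is to prove Lemma~\ref{lem:prod} by exhibiting the obvious ``take coordinates'' map as an explicit isomorphism. Writing a graph map $f:(\Z,\partial\Z)\to(G\times H,(v_0,w_0))$ in coordinates as $f(i)=(f_G(i),f_H(i))$, the fact that drives the whole argument is that adjacency in the product is coordinatewise and the basepoint $(v_0,w_0)$ splits as a pair; consequently $f$ is a based graph map into $G\times H$ if and only if $f_G$ and $f_H$ are based graph maps into $G$ and $H$. This gives a well-defined assignment $f\mapsto(f_G,f_H)$ on representatives, and it is surjective on representatives because for any pair $(a,b)$ the function $i\mapsto(a(i),b(i))$ is again a based graph map into the product.

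First I would descend this assignment to homotopy classes and check that it is a homomorphism. A based homotopy $h:\Z\times I_m\to G\times H$ decomposes coordinatewise into based homotopies $h_G$ and $h_H$ by the same reasoning, so homotopic maps have coordinatewise homotopic coordinates; hence $[f]\mapsto([f_G],[f_H])$ is a well-defined map $A_1(G\times H)\to A_1(G)\times A_1(H)$. Since concatenation is compatible with taking coordinates (up to the harmless padding by the basepoint that equalizes support radii), this map sends products to products and is a homomorphism.

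The hard part will be constructing the inverse, i.e.\ proving that the coordinate map is injective: if $f_G\simeq f_G'$ and $f_H\simeq f_H'$, I must produce a single based homotopy $f\simeq f'$. This is where the naive guess breaks down, and it is the only genuine obstacle. The two given homotopies $\phi_1$ and $\phi_2$ have different lengths $m_1\neq m_2$, so they do not share a common domain $\Z\times I_m$, and even after equalizing lengths the superimposed map $(\phi_1,\phi_2)$ need not be a graph map if a single step of the product is not allowed to move both coordinates at once. The fix is to \emph{sequence} the homotopies rather than superimpose them: first deform the $G$-coordinate from $f_G$ to $f_G'$ while freezing the $H$-coordinate at $f_H$, then deform the $H$-coordinate from $f_H$ to $f_H'$ while freezing the $G$-coordinate at $f_G'$. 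Concatenating the two stages over $\Z\times I_{m_1+m_2}$ yields the required homotopy; the only thing to verify is that the pieces match and give a graph map across the splice at $j=m_1$, which is routine.

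With both directions established the coordinate map is a bijective homomorphism, hence an isomorphism. I would close by noting that the same argument iterates to give $A_1(G^n)\cong A_1(G)^n$, the natural product form to keep in mind when passing to the reduced powers $\rp{G}{n}$.
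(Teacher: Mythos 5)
Your proposal is correct and follows essentially the same route as the paper's proof: the coordinatewise characterization of based graph maps and homotopies into $G\times H$, together with the key trick of \emph{sequencing} the two homotopies over $\Z\times I_{m_1+m_2}$ (first deforming the $G$-coordinate while freezing the $H$-coordinate at $f_H$, then deforming the $H$-coordinate while freezing the $G$-coordinate at $f_G'$) rather than superimposing them. The paper's proof is exactly this construction, including your observation that the naive pair $(\phi_1,\phi_2)$ need not be a graph map.
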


In what follows, we fix a base vertex $v_0\in G$ and write $\v_0=(v_0,\ldots,v_0)\in G^n$ and $\x_0=v_0^n\in \rp{G}{n}$. We will consider these as the base vertices of $G^n$ and $\rp{G}{n}$, respectively, so that $A_1(G)=A_1(G,v_0)$, $A_1(G^n)=A_1(G^n,\v_0)$ and $A_1(\rp{G}{n}) = A_1(\rp{G}{n},\x_0)$. Note that $\eta(\v_0) = \x_0$, so $\eta$ induces a homomorphism $\eta_*:A_1(G^n) \to A_1(\rp{G}{n})$.

\begin{lemma}
The homomorphism $\eta_*:A_1(G^n) \to A_1(\rp{G}{n})$ is surjective.

\begin{proof}
Let $\x_0$ and $\v_0$ be defined as above. Let $C=(\x_0,\ldots,\x_k)$ be a cycle of $\rp{G}{n}$. It will suffice to construct a cycle $Z=(\v_0,\ldots,\v_k)$ of $G^n$ such that $\eta(Z)=C$. Assume without loss of generality that $\x_i$ and $\x_{i+1}$ are distinct for all $i$. The quotient $\x_{i+1}/\x_i$ has the form $x_v/x_u$ for some adjacent vertices $u,v\in G$. For $i=0,\ldots,k-1$ in order, at least one coordinate of $\v_i$ is $u$; choose one such coordinate and replace it with $v$ to obtain $\v_{i+1}$. By construction we have $\v_k=\v_0$ and $\eta(\v_i)=\x_i$ for all $i$. Thus $Z$ is the desired cycle of $G^n$.
\end{proof}
\label{lem:etasurj}
\end{lemma}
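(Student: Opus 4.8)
The plan is to prove surjectivity by a direct path-lifting argument, exploiting the fact that $\eta$ is a quotient map that merely forgets which token sits where. First I would recall that an element of $A_1(\rp{G}{n},\x_0)$ is represented by a based closed walk $C=(\x_0,\x_1,\ldots,\x_k)$ with $\x_k=\x_0$, and that under the induced homomorphism one has $\eta_*[C']=[\eta(C')]$ for any based closed walk $C'$ in $G^n$. Hence it suffices to show that every such $C$ admits a \emph{lift}, namely a based closed walk $Z=(\v_0,\v_1,\ldots,\v_k)$ in $G^n$ with $\eta(\v_i)=\x_i$ for all $i$; then $\eta(Z)=C$ on the nose, so $\eta_*[Z]=[C]$, and since $[C]$ is arbitrary the map is surjective.

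Next I would construct $Z$ inductively, maintaining a single consistent lift rather than choosing preimages edge by edge. Begin with $\v_0=(v_0,\ldots,v_0)$, the unique preimage of $\x_0=v_0^n$. Discarding repeats, assume $\x_i\neq\x_{i+1}$, so that $\x_{i+1}/\x_i=x_v/x_u$ for an edge $uv$ of $G$; that is, one token moves from $u$ to $v$. Since $\eta(\v_i)=\x_i$ and $\x_i$ is divisible by $x_u$, at least one coordinate of $\v_i$ equals $u$. I would select one such coordinate and change it to $v$, obtaining a vertex $\v_{i+1}$ adjacent to $\v_i$ in $G^n$ with $\eta(\v_{i+1})=\x_{i+1}$. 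This realizes each edge of $C$ as an edge of $Z$.

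Finally I would check that $Z$ closes up at the base vertex: $\v_k$ is a preimage of $\x_k=\x_0=v_0^n$, and since the fully collapsed configuration $v_0^n$ has the unique preimage $(v_0,\ldots,v_0)$ under $\eta$, necessarily $\v_k=\v_0$. Thus $Z$ is a based closed walk with $\eta(Z)=C$, which completes the argument.

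The step I expect to require the most care is the verification that the inductive lift is always well defined, namely that a coordinate equal to $u$ is available at each stage; this rests precisely on $\x_i$ being divisible by $x_u$, which is the condition encoding the edge from $\x_i$ to $\x_{i+1}$. I would stress that, unlike the homology statement in Lemma \ref{lem:hammack}, no correction paths are needed here: because the running lift reproduces $C$ exactly rather than only up to the permutation ambiguity of $\eta$, the argument never leaves the level of walks and no based homotopies must be manipulated. The return to the base vertex is then forced by the rigidity of $v_0^n$, so there is no freedom to account for at the final vertex.
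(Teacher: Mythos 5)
Your proposal is correct and follows essentially the same route as the paper's proof: lift the based closed walk $C$ edge by edge, using divisibility of $\x_i$ by $x_u$ to find a coordinate of $\v_i$ equal to $u$ and moving it to $v$. Your closing observation that $\v_k=\v_0$ is forced because $v_0^n$ has the unique preimage $(v_0,\ldots,v_0)$ under $\eta$ makes explicit a point the paper dismisses with ``by construction,'' which is a welcome clarification rather than a deviation.
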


The argument for the following proposition is essentially due to P.~A.~Smith \cite{smith1936}.

\begin{prop}
For all $n\geq 2$ the group $A_1(\rp{G}{n})$ is abelian.

\begin{proof}
Let $n\geq 2$ and $\alpha,\beta\in A_1(G)$. Let $\alpha^{(i)}\in A_1(G)^n$ denote the element whose $i$th coordinate is $\alpha$ and whose other coordinates are all 1. Define $\beta^{(i)}$ similarly. We consider these as elements of $A_1(G^n)$ under the isomorphism $A_1(G^n)\cong A_1(G)^n$ afforded by Lemma \ref{lem:prod}. Thus $A_1(G^n)$ is generated by all elements of the form $\alpha^{(i)}$, as alpha runs over $A_1(G)$ and $i$ runs over all indices. Since $\eta_*:A_1(G^n) \to A_1(\rp{G}{n})$ is surjective by Lemma \ref{lem:etasurj}, it suffices to show that $\eta_*(\alpha^{(i)})$ and $\eta_*(\beta^{(j)})$ commute in $A_1(\rp{G}{n})$ for any $i$ and $j$. Note that $\eta_*(\alpha^{(i)})=\eta_*(\alpha^{(j)})=\alpha$, and similarly with $\beta$. Hence
\[\eta_*(\alpha^{(i)})\eta_*(\beta^{(j)}) = \eta_*(\alpha^{(1)})\eta_*(\beta^{(2)}) = \eta_*(\alpha^{(1)}\beta^{(2)}).\]
Note that in $A_1(G^n)$ we have $\alpha^{(1)}\beta^{(2)} = (\alpha,\beta,\cdots)  =\beta^{(2)}\alpha^{(1)}$, so
\[\eta_*(\alpha^{(1)}\beta^{(2)}) = \eta_*(\beta^{(2)}\alpha^{(1)}) = \eta_*(\beta^{(2)})\eta_*(\alpha^{(1)}) = \eta_*(\beta^{(j)})\eta_*(\alpha^{(i)}),\]
and we are done.
\end{proof}
\label{prop:smith}
\end{prop}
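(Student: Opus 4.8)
The plan is to leverage the two preceding lemmas: the product decomposition $A_1(G^n)\cong A_1(G)^n$ from Lemma \ref{lem:prod}, together with the surjectivity of $\eta_*:A_1(G^n)\to A_1(\rp{G}{n})$ from Lemma \ref{lem:etasurj}. Fixing the basepoints $\v_0$ and $\x_0$ as above, I would first record that under the product isomorphism, $A_1(G^n)$ is generated by the elements $\alpha^{(i)}$, where $\alpha$ ranges over $A_1(G)$ and $\alpha^{(i)}$ denotes the loop carrying $\alpha$ in the $i$th coordinate and resting at $v_0$ in all others. Since $\eta_*$ is surjective, the images $\eta_*(\alpha^{(i)})$ generate $A_1(\rp{G}{n})$, so it suffices to show that any two such images commute.

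The heart of Smith's idea is that the quotient by $\Sigma_n$ erases the distinction between coordinates. Concretely, let $a:(\Z,\partial\Z)\to(G,v_0)$ represent $\alpha$, and let $f_i:(\Z,\partial\Z)\to(G^n,\v_0)$ be the loop representing $\alpha^{(i)}$, carrying $a$ in the $i$th coordinate and resting at $v_0$ in all others. Then the composite $\eta\circ f_i$ sends each $k$ to the monomial $a(k)\,v_0^{n-1}$, which does not depend on $i$. Hence $\eta\circ f_i$ is literally the same map for every $i$, so $\eta_*(\alpha^{(i)})$ is a single well-defined element of $A_1(\rp{G}{n})$, and the same holds for $\beta$. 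This is precisely where indistinguishability of tokens enters, and I expect this coordinate-independence to be the conceptual crux, even though its verification is immediate.

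With that in hand, the commutation is purely formal. Since $n\geq 2$, I may place $\alpha$ in coordinate $1$ and $\beta$ in coordinate $2$. In the direct product $A_1(G)^n$ these lie in distinct factors, so $\alpha^{(1)}\beta^{(2)}=\beta^{(2)}\alpha^{(1)}$ in $A_1(G^n)$. Applying the homomorphism $\eta_*$ and using coordinate-independence to rewrite $\eta_*(\alpha^{(i)})=\eta_*(\alpha^{(1)})$ and $\eta_*(\beta^{(j)})=\eta_*(\beta^{(2)})$, I obtain $\eta_*(\alpha^{(i)})\eta_*(\beta^{(j)})=\eta_*(\beta^{(j)})\eta_*(\alpha^{(i)})$ for all $i$ and $j$. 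Since the generators of $A_1(\rp{G}{n})$ pairwise commute, the group is abelian. The hypothesis $n\geq 2$ is used in exactly one place—having two distinct coordinates available to separate $\alpha$ from $\beta$—which is what makes it indispensable.
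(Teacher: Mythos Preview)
Your proposal is correct and follows essentially the same argument as the paper: use the product decomposition of Lemma \ref{lem:prod} and the surjectivity of $\eta_*$ from Lemma \ref{lem:etasurj}, observe that $\eta_*(\alpha^{(i)})$ is independent of $i$, and then exploit the commutation $\alpha^{(1)}\beta^{(2)}=\beta^{(2)}\alpha^{(1)}$ in distinct coordinates. Your explicit verification that $\eta\circ f_i(k)=a(k)\,v_0^{n-1}$ is a welcome bit of detail that the paper leaves implicit.
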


\begin{proof}[Proof of Theorem \ref{thm:symab}]
Recall the CW complex $X(G)$ from Proposition \ref{prop:dfg}. Propositions \ref{prop:hur} and \ref{prop:smith} together imply that $A_1(\rp{G}{n})\cong H_1(X(\rp{G}{n}))$. To construct $X(\rp{G}{n})$, we attach a 2-cell to every Cartesian square of $\rp{G}{n}$. Hence Proposition \ref{prop:hombasis} gives $H_1(X(\rp{G}{n})) = H_1(X(G\x))$. We have $H_1(X(G\x))\cong \hc_1(G\x) \cong \hc_1(G)$. Putting everything together, we have $A_1(\rp{G}{n})\cong \hc_1(G)$, as desired.
\end{proof}
\section{Discrete homotopy of token graphs}
\label{sec:tokbraid}

We first prove Theorem \ref{thm:tokbraid}. Then, to shed light on the separate case in which $G$ contains 3- or 4-cycles, we discuss the meaning of \emph{local exchanges} mentioned in the introduction. Throughout this section, we assume that $G$ is sufficiently subdivided for $n$.

\subsection{Proof of Theorem \ref{thm:tokbraid}}

We will briefly regard $G$ as a 1-complex, so that $G$ consists of 0-cells (vertices) and 1-cells (edges). If $c$ is a 0-cell, we write $\partial c=\{c\}$; if $c$ is a 1-cell, we write $\partial c$ for the set of endpoints of $c$. Abrams \cite{abrams2000} defines the \emph{discrete configuration space} $\ud{G}{n}$ as follows:
\[\ud{G}{n} = \{\{c_1,\ldots,c_n\}\subset G : \partial c_i\cap \partial c_j = \emptyset\mbox{ if }i\neq j\}.\]
Here we consider the elements of $G$ to be cells, so that each $c_i$ is a cell. Thus $\ud{G}{n}$ is a cubical complex, i.e. a polyhedral complex whose cells are cubes of various dimensions \cite[Definition 2.42]{kozlov2008}. The dimension of the cell $\{c_1,\ldots,c_n\}$ of $\ud{G}{n}$ is the number of 1-cells among the $c_i$. The vertices of the cell $\{c_1,\ldots,c_n\}$ are the sets $\{d_1,\ldots,d_n\}$ with $d_i\in \partial c_i$ for all $i$. Similar statements apply to $\ud{G}{n}$, replacing tuples with sets.

\begin{prop}[{\cite{abrams2000, kim2012, prue2014}}]
If $G$ is sufficiently subdivided for $n$, then $\ud{G}{n}$ is a deformation retract of $\uc{G}{n}$.
\label{prop:abrams}
\end{prop}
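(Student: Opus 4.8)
The plan is to prove the equivariant version for ordered configuration spaces and then pass to the $\Sigma_n$-quotient. Write $|G|$ for the geometric realization of $G$ as a $1$-complex and set $\mathrm{Conf}_n(G)=\{(x_1,\dots,x_n)\in|G|^n : x_i\neq x_j\text{ whenever }i\neq j\}$, where $|G|^n$ is the topological product; thus $\uc{G}{n}=\mathrm{Conf}_n(G)/\Sigma_n$. The product gives $|G|^n$ the structure of a cube complex whose closed cells are products $\overline{c_1}\times\cdots\times\overline{c_n}$ of closed cells of $G$. Let $\mathrm{DC}_n(G)$ be the subcomplex spanned by those product cells whose factors have pairwise disjoint closures; its realization lies in $\mathrm{Conf}_n(G)$, it is the ordered analog of $\ud{G}{n}$, and $\ud{G}{n}=\mathrm{DC}_n(G)/\Sigma_n$. (Note that $\overline{c_i}\cap\overline{c_j}=\emptyset$ is equivalent to the condition $\partial c_i\cap\partial c_j=\emptyset$ used to define $\ud{G}{n}$.) Because $\Sigma_n$ acts freely by permuting coordinates and preserves all of these structures, it suffices to build a $\Sigma_n$-equivariant strong deformation retraction of $\mathrm{Conf}_n(G)$ onto $|\mathrm{DC}_n(G)|$ and then descend to the quotient.

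First I would construct the retraction as a collision-avoiding ``spreading'' flow. The target $|\mathrm{DC}_n(G)|$ consists of configurations in which every point either sits at a vertex or ranges over a single closed edge, with no two occupied closed cells meeting. Given an arbitrary configuration, I would push the points apart along $G$ so as to increase the combinatorial separation between tokens and between tokens and essential vertices, snapping a point to a vertex once it is sufficiently isolated and otherwise sliding it outward along the incident degree-$2$ arc. Carried out continuously and symmetrically in the $n$ points, this defines a homotopy $r_t$ that fixes $|\mathrm{DC}_n(G)|$ pointwise, is equivariant, and terminates in $|\mathrm{DC}_n(G)|$. The routine part is checking continuity together with $r_0=\mathrm{id}$ and $r_1$ landing in the subcomplex.

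The hard part will be guaranteeing that the flow never collides two points and that it can always finish, and this is exactly where the hypothesis that $G$ is sufficiently subdivided for $n$ enters. Condition (1), that every path between distinct essential vertices has length at least $n-1$, ensures that each maximal degree-$2$ arc of $G$, together with its essential endpoints, has enough vertices to hold all the tokens the flow must park on it without forcing two closed cells to meet; condition (2), that every non-nullhomotopic cycle based at an essential vertex has length at least $n+1$, plays the same role around cycles, where tokens can circulate. The most delicate case is the neighborhood of an essential (branch or leaf) vertex, where several arcs meet and tokens from different arcs compete for room: one must order the outward pushes so that no two tokens are ever driven onto cells sharing that vertex. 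This local analysis near essential vertices is precisely the step that forced Abrams's original argument to assume the stronger bound of $n+1$ on essential paths; obtaining the sharp bound $n-1$ requires the refined accounting of \cite{kim2012, prue2014}, and reproducing it is the crux of the proof.

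Finally, equivariance comes for free because the construction treats the $n$ points symmetrically and never refers to their labels, so $r_t$ descends to a strong deformation retraction of $\uc{G}{n}=\mathrm{Conf}_n(G)/\Sigma_n$ onto $\ud{G}{n}=\mathrm{DC}_n(G)/\Sigma_n$, as required. As an alternative to the explicit flow, one could instead equip a finite combinatorial model of $\mathrm{Conf}_n(G)$ with a discrete Morse function whose critical cells are exactly those of $\mathrm{DC}_n(G)$, so that $\uc{G}{n}$ collapses onto $\ud{G}{n}$; in that formulation the two subdivision conditions are what supply, for every ``unsafe'' cell, the free face needed to carry out the matched collapses.
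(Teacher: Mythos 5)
First, a point of calibration: the paper does not prove this proposition at all. It is imported verbatim from the literature, with the citation \cite{abrams2000, kim2012, prue2014} standing in for the argument (Abrams's original deformation retraction, plus the later work establishing the sharp subdivision bounds). So there is no in-paper proof to match; the relevant comparison is with those cited proofs, and your outline does have the same general shape as theirs: pass to the ordered configuration space, identify the ordered cube-complex model $\mathrm{DC}_n(G)\subset |G|^n$, observe that disjoint closures of cells is equivalent to the boundary condition defining $\ud{G}{n}$, build a $\Sigma_n$-equivariant strong deformation retraction, and descend to quotients (the descent is legitimate, since $I$ is locally compact and hence the product of the quotient map with $\mathrm{id}_I$ is again a quotient map; note that freeness of the action is not actually what makes this work).

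The genuine gap is that the proof itself is never carried out. The ``spreading flow'' is not a construction: ``push the points apart so as to increase combinatorial separation, snapping a point to a vertex once it is sufficiently isolated'' specifies no map, and none of the required verifications --- well-definedness, continuity (delicate precisely at near-collision configurations, since $\mathrm{Conf}_n(G)$ is not compact), termination in the subcomplex, fixing $|\mathrm{DC}_n(G)|$ pointwise --- is even attempted. More importantly, the quantitative hypotheses (every path between distinct essential vertices has length at least $n-1$; every non-nullhomotopic cycle based at an essential vertex has length at least $n+1$) are never actually used; you only indicate where they \emph{would} enter. But that quantitative collision-avoidance analysis \emph{is} the proposition: Abrams's argument with this flow-type strategy required length at least $n+1$ on essential paths, and weakening that to $n-1$ is exactly the content of the refinements you cite. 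You acknowledge this yourself by calling it ``the crux'' and deferring it, which makes the proposal a plan for a proof rather than a proof. The closing alternative via discrete Morse theory has the same status: no Morse function or acyclic matching is exhibited, so nothing is actually collapsed. To turn this into a proof you would need either to define the flow explicitly and run the local analysis at essential vertices with the stated bounds, or to reproduce the matching-and-collapse argument of the cited papers.
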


Since $\uc{G}{n}$ is not compact in general, it does not have the structure of a finite CW complex. Proposition \ref{prop:abrams} implies that $\uc{G}{n}$ is in fact homotopy equivalent to the much nicer space $\ud{G}{n}$. Given a CW complex $K$, let $\sk_n K$ denote its $n$-skeleton. Recall the 2-complex $X(G)$ from Proposition \ref{prop:dfg}, obtained by attaching a 2-cell to each 3-cycle and chordless 4-cycle of $G$. Our proof of Theorem \ref{thm:tokbraid} relies on the following observation.

\begin{prop}
We have $\tok{G}{n} \cong \sk_1 \ud{G}{n}$ and, if $G$ contains no 3- or 4-cycles, then $K(\tok{G}{n})\cong\sk_2 \ud{G}{n}$.

\begin{proof}
The 0-cells of $\ud{G}{n}$ are sets of $n$ distinct vertices of $G$. These sets are clearly in bijection with the vertices of $\tok{G}{n}$. Two 0-cells are the endpoints of a 1-cell in $\ud{G}{n}$ if and only if their symmetric difference has the form $\{u,v\}$ for adjacent vertices $u$ and $v$ of $G$. This occurs precisely when, writing $\x$ and $\y$ for the corresponding monomials, we have either $\x/\y = u/v$ or $\x/\y=v/u$. But this is true for some such $u$ and $v$ if and only if $\x$ and $\y$ are adjacent in $\tok{G}{n}$. Hence $\tok{G}{n}\cong \sk_1 \ud{G}{n}$.

Suppose that $G$ contains no 3- or 4-cycles. It follows that the reduced power $\rp{G}{n}$ contains no 3-cycles. Moreover the only 4-cycles of $\rp{G}{n}$ are the Cartesian squares from Definition \ref{mydef:cartsq}. Consider an arbitrary Cartesian square $(e_1\,\square\,e_2)\x$ of $\rp{G}{n}$, and write $\x=v_1\cdots v_{n-1}$. This Cartesian square is contained in the subgraph $\tok{G}{n}$ if and only if the following conditions hold:
\begin{enumerate}
	\item The edges $e_1$ and $e_2$ have no endpoints in common
	\item $\x$ is not divisible by any endpoints of $e_1$ or $e_2$
	\item $\x$ is squarefree.
\end{enumerate}
If these conditions hold, then the Cartesian square gives rise to a 2-cell $\{e_1,e_2,v_1,\ldots,v_{n-1}\}$ of $\ud{G}{n}$. Conversely, let $\{f_1,f_2,w_1,\ldots,w_{n-1}\}$ be a 2-cell of $\ud{G}{n}$, where the $f_i$ are 1-cells and the $w_i$ are 0-cells. The condition $\partial f_1\cap \partial f_2 = \emptyset$ means that $f_1$ and $f_2$ have no endpoints in common. The condition $\partial f_i\cap \partial w_j=\emptyset$ means that $\y=w_1\cdots w_{n-1}$ is not divisible by any endpoints of $f_1$ or $f_2$. The condition $\partial w_i\cap \partial w_j=\emptyset$ if $i\neq j$ means that $\y$ is squarefree. Hence $(f_1\,\square\, f_2)\y$ is a Cartesian square contained in $\tok{G}{n}$. This gives a bijection between the 2-cells of $\ud{G}{n}$ and the 4-cycles of $\tok{G}{n}$ that agrees with the isomorphism $\tok{G}{n}\cong \sk_1 \ud{G}{n}$. Since $X(\tok{G}{n})$ is constructed by attaching a 2-cell to each 4-cycle, we obtain an isomorphism $X(\tok{G}{n})\cong \sk_2 \ud{G}{n}$.
\end{proof}
\label{prop:skel}
\end{prop}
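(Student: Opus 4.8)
The plan is to build both isomorphisms at the level of cells, matching token data to the cellular data of Abrams' complex. Throughout I regard $G$ as a $1$-complex and use that a cell of $\ud{G}{n}$ is a set $\{c_1,\ldots,c_n\}$ of cells of $G$ with pairwise disjoint closures, its dimension being the number of $1$-cells among the $c_i$.

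For the first isomorphism I would begin on vertices. A $0$-cell of $\ud{G}{n}$ is a set of $n$ closure-disjoint vertices of $G$, which is exactly the data of a squarefree degree-$n$ monomial, i.e. a vertex of $\tok{G}{n}$; this is a canonical bijection on vertex sets. A $1$-cell has the form $\{e,w_1,\ldots,w_{n-1}\}$ with $e=uv$ an edge and the $w_j$ vertices, all closure-disjoint; its two vertices replace $e$ by either endpoint and so differ by sliding one token across $e$, which is precisely an edge of $\tok{G}{n}$. Conversely an edge of $\tok{G}{n}$ joins squarefree $\x\sim\y$ with $\x/\y=u/v$, i.e. $\x=u\mu$ and $\y=v\mu$ for a squarefree $\mu$ not divisible by $u$ or $v$, which is exactly the $1$-cell $\{uv\}\cup\{\text{vertices of }\mu\}$. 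The disjointness conditions $\partial e\cap\partial w_j=\emptyset$ and $\partial w_i\cap\partial w_j=\emptyset$ are precisely what makes both endpoints squarefree vertices of $\tok{G}{n}$, so the correspondence is a bijection on edges, giving $\tok{G}{n}\cong\sk_1\ud{G}{n}$.

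For the second isomorphism the $1$-skeleta already agree, so it suffices to match the $2$-cells of $\ud{G}{n}$ with the cells attached when forming the complex $X(\tok{G}{n})$ of Proposition \ref{prop:dfg}, namely the $3$-cycles and chordless $4$-cycles of $\tok{G}{n}$. The key reduction is a classification of short cycles of $\rp{G}{n}$. I would encode a closed walk $\x_0\to\cdots\to\x_\ell=\x_0$ by its token moves: each step has $\x_{i+1}/\x_i=v_i/u_i$ for an edge $u_iv_i$, and closing up forces $\sum_i(v_i-u_i)=0$ in the free abelian group on the vertices of $G$, equivalently the multiset identity $\{v_i\}=\{u_i\}$. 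Reading $u_i\to v_i$ as a directed multigraph, this says the multigraph is balanced, hence a disjoint union of directed closed walks of length $\ge 2$. For $\ell=3$ the only possibility on a loopless simple graph is a directed triangle, which would be a $3$-cycle of $G$; since $G$ has none, $\rp{G}{n}$ has no $3$-cycles and no $3$-cells are attached. For $\ell=4$ the decomposition is either two directed $2$-cycles on a pair of edges, giving a Cartesian square, or a single length-four Eulerian circuit, which on a graph with no $4$-cycles is either a directed $4$-cycle of $G$ (excluded) or a walk that backtracks along one or two edges and hence revisits a configuration. Since every vertex of $\tok{G}{n}$ is squarefree, the two edges of a Cartesian square cannot share an endpoint, so every $4$-cycle of $\tok{G}{n}$ is a Cartesian square with vertex-disjoint edges.

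Finally I would match Cartesian squares with $2$-cells. Reading off Definition \ref{mydef:cartsq}, a Cartesian square $(e_1\,\square\,e_2)\x$ lies in $\tok{G}{n}$ exactly when $e_1,e_2$ share no endpoint, $\x$ is squarefree, and $\x$ is divisible by no endpoint of $e_1$ or $e_2$; these are precisely the closure-disjointness conditions making $\{e_1,e_2,w_1,\ldots,w_{n-2}\}$ (where $\x=w_1\cdots w_{n-2}$) a $2$-cell of $\ud{G}{n}$, and the four vertices of the square are the four vertices of that cube. A short check that each such square is chordless---its two diagonals $\{ac\x,bd\x\}$ and $\{ad\x,bc\x\}$ each require moving two tokens and so are non-edges---shows that the $2$-cells attached in $X(\tok{G}{n})$ are exactly the $2$-cells of $\ud{G}{n}$, compatibly with the $1$-skeleton isomorphism, yielding $X(\tok{G}{n})\cong\sk_2\ud{G}{n}$. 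The step I expect to be the main obstacle is the cycle classification, specifically proving that the only $4$-cycles of $\rp{G}{n}$ are Cartesian squares: the balanced-multigraph bookkeeping cleanly rules out genuinely new $4$-cycles, but care is needed to argue that the degenerate backtracking walks revisit a configuration rather than tracing a simple $4$-cycle, and handling indistinguishable tokens with multiplicities makes this more delicate than in the squarefree setting.
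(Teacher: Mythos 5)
Your proposal is correct and follows essentially the same route as the paper: identify the 0- and 1-cells of $\ud{G}{n}$ with the vertices and edges of $\tok{G}{n}$, classify the short cycles of $\rp{G}{n}$ (no 3-cycles, and all 4-cycles are Cartesian squares), and then match the Cartesian squares lying in $\tok{G}{n}$ with the 2-cells of $\ud{G}{n}$ via the same three disjointness conditions. The only real difference is that you supply an argument (the balanced directed multigraph decomposition, including the check that backtracking move orders revisit a configuration) for the cycle classification that the paper simply asserts, and your count of $n-2$ vertex factors in a 2-cell is the correct one, where the paper's proof writes $v_1\cdots v_{n-1}$.
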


\begin{proof}[Proof of Theorem \ref{thm:tokbraid}]
The theorem follows immediately from Propositions \ref{prop:dfg} and \ref{prop:skel}.
\end{proof}

\subsection{Local exchanges}

It is natural to ask what $A_1(\tok{G}{n})$ computes when $G$ contains 3- or 4-cycles. The answer is that $A_1(\tok{G}{n})$ is a quotient of $B_n(G)$, where the extra relations correspond to the 2-cells of $X(\tok{G}{n})$ not in $\ud{G}{n}$. By Proposition \ref{prop:dfg}, these 2-cells correspond to the 3-cycles and chordless 4-cycles of $\tok{G}{n}$ that are not Cartesian squares. We call these cycles of $\tok{G}{n}$ \emph{local exchanges}. In terms of the analogy with robots moving about a factory floor, local exchanges represent tasks performed by a small number of robots at a particular site. When $G$ contains 3- or 4-cycles, the group $A_1(\tok{G}{n})$ differs from $B_n(G)$ by essentially ignoring these tasks.

We illustrate three types of local exchanges in Figure \ref{fig:locex}. Consider, for example, Figure \ref{fig:locex}(a), where a single token is pictured moving around a 3-cycle of $G$. Assuming all other tokens remain stationary, this movement gives rise to a 3-cycle of $\tok{G}{n}$. Figures \ref{fig:locex}(b) and (c) depict configurations that give chordless 4-cycles of $\tok{G}{n}$. Each of these three types of local exchange has a \emph{complement}, in which the white vertices represent tokens and the black vertices are empty. For example, the complement of type (a) consists of 2 tokens moving around a 3-cycle of $G$. We denote these complementary types by (a'), (b') and (c'), respectively. These six types partition the set of local exchanges.

\begin{figure}[ht]
\centering
\begin{tikzpicture}[scale=0.9]
\def\a{3.3}
\draw (-\a/2,{sqrt(3)/2}) node {(a)};
\draw (0,0) -- (2,0) -- (1,{sqrt(3)})--cycle;
\draw (\a,0) -- (\a+2,0) -- (\a+1,{sqrt(3)})--cycle;
\draw (2*\a,0) -- (2*\a+2,0) -- (2*\a+1,{sqrt(3)})--cycle;
\draw (3*\a,0) -- (3*\a+2,0) -- (3*\a+1,{sqrt(3)})--cycle;
\foreach \x in {0,...,3} {
\draw[fill=white] (\x*\a,0) circle (0.12);
\draw[fill=white] (\x*\a+2,0) circle (0.12);
\draw[fill=white] (\x*\a+1,{sqrt(3)}) circle (0.12);
}
\draw[fill=black] (0,0) circle (0.12);
\draw[fill=black] (\a+2,0) circle (0.12);
\draw[fill=black] (2*\a+1,{sqrt(3)}) circle (0.12);
\draw[fill=black] (3*\a,0) circle (0.12);
\end{tikzpicture}

\vspace{5mm}
\begin{tikzpicture}[scale=0.8]
\def\a{3.5}
\draw (-\a/2,1) node {(b)};
\draw (0,0) -- (2,0) -- (2,2)--(0,2)--cycle;
\draw (\a,0) -- (\a+2,0) -- (\a+2,2)--(\a,2)--cycle;
\draw (2*\a+0,0) -- (2*\a+2,0) -- (2*\a+2,2)--(2*\a+0,2)--cycle;
\draw (3*\a+0,0) -- (3*\a+2,0) -- (3*\a+2,2)--(3*\a+0,2)--cycle;
\draw (4*\a+0,0) -- (4*\a+2,0) -- (4*\a+2,2)--(4*\a+0,2)--cycle;
\foreach \x in {0,...,4} {
\draw[fill=white] (\x*\a,0) circle (0.14);
\draw[fill=white] (\x*\a+2,0) circle (0.14);
\draw[fill=white] (\x*\a+2,2) circle (0.14);
\draw[fill=white] (\x*\a,2) circle (0.14);
}
\draw[fill=black] (0,0) circle (0.14);
\draw[fill=black] (\a+2,0) circle (0.14);
\draw[fill=black] (2*\a+2,2) circle (0.14);
\draw[fill=black] (3*\a,2) circle (0.14);
\draw[fill=black] (4*\a,0) circle (0.14);
\end{tikzpicture}

\vspace{5mm}
\begin{tikzpicture}[scale=0.8]
\def\a{3.5}
\draw (-\a/2,1) node {(c)};
\draw (0,0) -- (2,0) -- (2,2)--(0,2)--cycle;
\draw (\a,0) -- (\a+2,0) -- (\a+2,2)--(\a,2)--cycle;
\draw (2*\a+0,0) -- (2*\a+2,0) -- (2*\a+2,2)--(2*\a+0,2)--cycle;
\draw (3*\a+0,0) -- (3*\a+2,0) -- (3*\a+2,2)--(3*\a+0,2)--cycle;
\draw (4*\a+0,0) -- (4*\a+2,0) -- (4*\a+2,2)--(4*\a+0,2)--cycle;
\foreach \x in {0,...,4} {
\draw[fill=white] (\x*\a,0) circle (0.14);
\draw[fill=white] (\x*\a+2,0) circle (0.14);
\draw[fill=white] (\x*\a+2,2) circle (0.14);
\draw[fill=white] (\x*\a,2) circle (0.14);
}
\draw[fill=black] (0,0) circle (0.14);
\draw[fill=black] (2,2) circle (0.14);
\draw[fill=black] (\a+2,0) circle (0.14);
\draw[fill=black] (\a+2,2) circle (0.14);
\draw[fill=black] (2*\a+2,0) circle (0.14);
\draw[fill=black] (2*\a,2) circle (0.14);
\draw[fill=black] (3*\a,2) circle (0.14);
\draw[fill=black] (3*\a+2,2) circle (0.14);
\draw[fill=black] (4*\a,0) circle (0.14);
\draw[fill=black] (4*\a+2,2) circle (0.14);
\end{tikzpicture}
\caption{Three types of local exchanges.}
\label{fig:locex}
\end{figure}
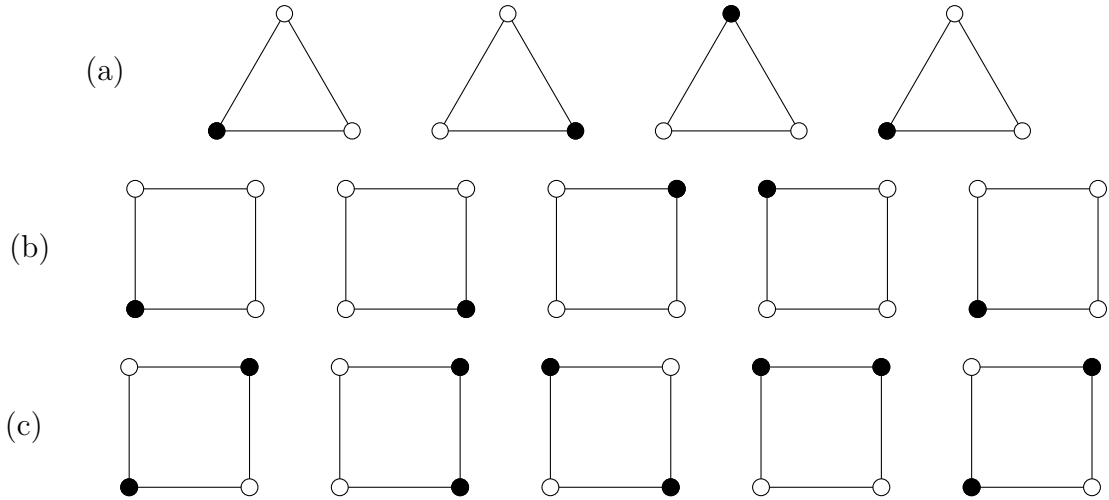

It is easy to count the local exchanges. Let $N$ denote the number of vertices of $G$. Assume that $n\geq 3$ and $N\geq n+3$. Each 3-cycle of $G$ contributes $\binom{N-3}{n-1}$ local exchanges of type (a) and $\binom{N-3}{n-2}$ of type (a'). Each chordless 4-cycle of $G$ contributes $\binom{N-4}{n-1}$ local exchanges of type (b), $\binom{N-4}{n-3}$ of type (b') and $2\binom{N-4}{n-2}$ each of types (c) and (c'). In total, there are
\[\kappa_3\left(\binom{N-3}{n-1}+\binom{N-3}{n-2}\right)+\kappa_4\left(\binom{N-4}{n-1}+4\binom{N-4}{n-2}+\binom{N-4}{n-3}\right)\]
local exchanges, where $\kappa_i$ denotes the number of chordless $i$-cycles of $G$. Hence $X(\tok{G}{n})$ is obtained from $\sk_2 \ud{G}{n}$ by attaching this many 2-cells. Adjustments to this count can be made in case $n<3$ or $N< n+3$.
\section{Open questions}
\label{sec:openq}

\subsection{Higher discrete homotopy groups of reduced powers}

Throughout this subsection, let $X$ be a simply-connected CW complex. There are a number of theorems relating the homotopy groups of $\sp{X}{n}$ to the singular homology groups of $X$. P. A. Smith proved the first such result, as mentioned in the introduction. A quarter-century later, Dold and Puppe \cite{dold1961} proved the following higher homotopy version of the theorem of Smith:

\begin{thm}[{\cite[Satz 12.11]{dold1961}}]
Suppose that $X$ is $k$-connected, i.e. that $\pi_i(X)$ is trivial for $i=1,\ldots, k$. If $n\geq 2$, then
\begin{equation*}
	\pi_i(\sp{X}{n})\cong \widetilde{H}_i(X)
\end{equation*}
for $i=0,\ldots,2n+k-1$, where $\widetilde{H}_i(X)$ is the $i$th reduced singular homology group of $X$.
\label{thm:doldpuppe}
\end{thm}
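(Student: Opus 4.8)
The plan is to compare the finite symmetric product with the infinite one $\sp{X}{\infty}=\operatorname{colim}_m\sp{X}{m}$, whose homotopy groups are computed by the Dold--Thom theorem, and then to control the error made by truncating the symmetric-product filtration at level $n$. The whole argument rests on a single connectivity estimate for the filtration quotients.

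First I would invoke the Dold--Thom theorem: for a connected, well-pointed space $X$ the functor $\sp{X}{\infty}$ is a reduced homology theory, so that $\pi_i(\sp{X}{\infty})\cong\widetilde H_i(X)$ for all $i$. The essential input is that a cofibration $A\hookrightarrow X$ induces a quasifibration
\[\sp{A}{\infty}\longrightarrow\sp{X}{\infty}\longrightarrow\sp{(X/A)}{\infty}.\]
Its long exact sequences give the exactness axiom, while homotopy invariance together with the computation $\sp{S^q}{\infty}\cong K(\Z,q)$ supplies the remaining Eilenberg--Steenrod axioms; uniqueness of homology theories then identifies $\pi_*(\sp{X}{\infty})$ with $\widetilde H_*(X)$.

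Next I would study the inclusion $\sp{X}{n}\hookrightarrow\sp{X}{\infty}$ through the filtration $\sp{X}{1}\subset\sp{X}{2}\subset\cdots$, whose successive quotients are the symmetric smash powers
\[\sp{X}{m}/\sp{X}{m-1}\cong X^{\wedge m}/\Sigma_m=:X^{(m)}.\]
The crux is the estimate that $X^{(m)}$ is $(2m+k-2)$-connected whenever $X$ is $k$-connected. Granting this, every quotient with $m\geq n+1$ is $(2n+k)$-connected, so the relative groups $H_i(\sp{X}{\infty},\sp{X}{n})$ vanish for $i\leq 2n+k$. Since $X$ is simply connected, each $\sp{X}{m}$, and hence also $\sp{X}{\infty}$, is simply connected (Smith's theorem gives $\pi_1(\sp{X}{m})\cong H_1(X)=0$), so the relative Hurewicz theorem upgrades this to $\pi_i(\sp{X}{\infty},\sp{X}{n})=0$ for $i\leq 2n+k$. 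The long exact sequence of the pair then makes $\pi_i(\sp{X}{n})\to\pi_i(\sp{X}{\infty})\cong\widetilde H_i(X)$ an isomorphism for $i\leq 2n+k-1$, which is the assertion.

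It remains to establish the connectivity estimate, and this is where I expect the real difficulty to lie. I would give $X$ a CW structure with a single $0$-cell and no cells in dimensions $1,\dots,k$, so that its bottom cell has dimension $k+1$; then $X^{\wedge m}$ is built from cells of dimension $\geq m(k+1)$ which $\Sigma_m$ permutes. The free part of the orbit space contributes homology only in degrees $\geq m(k+1)$, and since $m(k+1)-1\geq 2m+k-2$, the sharp bound is forced instead by the cells with nontrivial stabilizer; the extremal case is $X=S^{k+1}$, where $X^{(m)}$ is a subquotient of $\sp{S^{k+1}}{\infty}\cong K(\Z,k+1)$. The hard point is precisely that the $\Sigma_m$-action feeds torsion into $\widetilde H_*(X^{(m)})$ in degrees far below $m(k+1)$, and one must show that none of it appears below degree $2m+k-1$; equivalently, one must locate the level of the symmetric-power filtration of the Eilenberg--MacLane space at which each low-degree homology class first appears. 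A reassuring check is the case $m=2$, where $X^{(2)}\simeq\Sigma^{k+2}\mathbb{RP}^{k}$ has its bottom homology in degree $k+3=2\cdot 2+k-1$. Organizing the general count through the derived functors of the non-additive functor $\operatorname{Sym}^m$ applied to the reduced chains of $X$---the framework of Dold and Puppe---is what I expect to make this bookkeeping tractable and to yield the sharp bound.
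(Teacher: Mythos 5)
First, a caveat about the comparison: the paper itself does not prove Theorem \ref{thm:doldpuppe}. It is quoted from Dold and Puppe \cite{dold1961} in Section \ref{sec:openq} purely as motivation for the open questions there, so your proposal can only be judged against the classical literature, not against an argument in the paper. With that said, your overall architecture is the standard route to this result, and it is calibrated exactly right: Dold--Thom identifies $\pi_i(\sp{X}{\infty})$ with $\widetilde{H}_i(X)$; the filtration quotients are indeed $\sp{X}{m}/\sp{X}{m-1}\cong X^{\wedge m}/\Sigma_m=:X^{(m)}$; and granting that each $X^{(m)}$ with $m\geq n+1$ is $(2n+k)$-connected, the relative Hurewicz theorem (legitimately applicable, since Smith's theorem makes every $\sp{X}{m}$ and hence $\sp{X}{\infty}$ simply connected) together with the long exact sequence of the pair $(\sp{X}{\infty},\sp{X}{n})$ yields isomorphisms precisely in the range $i\leq 2n+k-1$. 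Your consistency check is also correct: for $X=S^{k+1}$ one has $X^{(2)}\simeq\Sigma^{k+2}\mathbb{RP}^{k}$ (diagonalize the swap involution on $\mathbb{R}^{k+1}\times\mathbb{R}^{k+1}$ into $(u,v)\mapsto(u,-v)$ and one-point compactify), so the claimed connectivity bound is sharp.

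The genuine gap is the connectivity estimate itself: that $X^{(m)}$ is $(2m+k-2)$-connected whenever $X$ is $k$-connected. This is not a supporting lemma to be granted; it carries essentially all of the content of Satz 12.11, while everything else in your outline is the soft part. Your own discussion shows why it resists elementary treatment: with a minimal CW structure all cells of $X^{\wedge m}$ lie in dimensions at least $m(k+1)$, which is comfortably above $2m+k-2$, but the $\Sigma_m$-action is not free (indeed not even admissible without subdivision) along the diagonals, and forming the strict quotient introduces cells along the images of the diagonal strata in dimensions as low as $k+1$ --- for $m=2$, an entire copy of $X$. One must then prove that the homological contribution of these singular strata cancels below degree $2m+k-1$, and that requires real machinery: Nakaoka's transfer and Smith-theory analysis of symmetric products, or Dold and Puppe's derived functors $L\operatorname{Sym}^m$ with their cross-effect and connectivity calculus. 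Your closing appeal to ``the framework of Dold and Puppe'' to make exactly this bookkeeping tractable is, in effect, an appeal to the theorem you set out to prove. In short: the reduction is correct, the crux is correctly located and even correctly quantified, but the crux is left open, and the surrounding Hurewicz and exact-sequence scaffolding cannot substitute for it.
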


Currently, little is known about higher discrete homotopy groups (see, e.g., \cite{babson2006, lutz2020}). A positive answer to our next question would be a leap forward in discrete homotopy theory.

\begin{q}
Is there a discrete analog of Theorem \ref{thm:doldpuppe}? In other words, are there general conditions on $G$, $i$ and $n$ under which $A_i(\rp{G}{n}) \cong \widetilde{\hc}_i(G)$?
\end{q}

The most famous theorem on symmetric products was proven by Dold and Thom \cite{dold1956, dold1958}. Given a base point $x_0\in X$, we define (continuous) inclusions $\sp{X}{n}\to \sp{X}{n+1}$ by
\[[x_1,\ldots,x_n]\mapsto [x_0,x_1,\ldots,x_n].\]
This allows us to consider the infinite union
\[\sp{X}{\infty} = \bigcup_{n=1}^\infty \sp{X}{n},\]
where a set $C\subseteq \sp{X}{\infty}$ is closed if and only if $C\cap \sp{X}{n}$ is closed for all $n$. The space $\sp{X}{\infty}$ is called the \emph{infinite symmetric product} of $X$. While it depends on the base point $x_0$, this is typically suppressed in the notation.

\begin{thm}[{\cite{dold1956, dold1958}}]
If $X$ is a path-connected cell complex, then
\[\pi_i(\sp{X}{\infty}) \cong \widetilde{H}_i(X)\]
for all $i\geq 0$.
\label{thm:doldthom}
\end{thm}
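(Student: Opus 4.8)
The plan is to follow Dold and Thom's original strategy: show that the assignment $h_i(X) := \pi_i(\sp{X}{\infty})$ defines a reduced homology theory on pointed connected cell complexes, compute its coefficients, and then invoke the uniqueness of homology theories to identify it with reduced singular homology $\widetilde{H}_i$. Since every path-connected cell complex admits a CW approximation and both functors are homotopy invariant, I would reduce at the outset to pointed connected CW complexes. The structural fact to exploit is that $\sp{X}{\infty}$ is the free topological abelian monoid generated by $X$ with the basepoint as unit: a pointed map $X\to Y$ induces a continuous monoid homomorphism, and a pointed homotopy induces a homotopy of the induced maps, so homotopy invariance of $h_i$ is immediate.

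The heart of the argument, and the step I expect to be the main obstacle, is the exactness axiom. For a CW pair $(X,A)$ one must show that the projection $\sp{X}{\infty}\to\sp{(X/A)}{\infty}$ is a \emph{quasifibration} with fiber $\sp{A}{\infty}$. I would establish this using the Dold--Thom criterion for quasifibrations: filter $\sp{(X/A)}{\infty}$ by the subspaces of configurations of bounded cardinality, verify that the projection is a quasifibration over each open stratum, and construct the fiberwise deformations needed to glue these local statements into a global one. This is where essentially all of the genuine point-set and deformation-theoretic work resides. Granting it, the long exact sequence in homotopy for the quasifibration,
\[
\cdots \to \pi_i(\sp{A}{\infty}) \to \pi_i(\sp{X}{\infty}) \to \pi_i(\sp{(X/A)}{\infty}) \to \pi_{i-1}(\sp{A}{\infty}) \to \cdots,
\]
is precisely the exact sequence of the pair demanded of a reduced homology theory.

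It then remains to verify the other axioms and the coefficients. Additivity follows from the identification $\sp{(\bigvee_\alpha X_\alpha)}{\infty}\cong \prod_\alpha' \sp{X_\alpha}{\infty}$ with the restricted product of the monoids, giving $h_i(\bigvee_\alpha X_\alpha)\cong \bigoplus_\alpha h_i(X_\alpha)$. To pin down the dimension axiom I would show $\sp{S^n}{\infty}\simeq K(\Z,n)$: starting from $\sp{S^1}{\infty}\simeq S^1$ and applying the quasifibration to the contractible pair $(D^n,S^{n-1})$ yields a weak equivalence $\sp{S^{n-1}}{\infty}\simeq \Omega\,\sp{S^n}{\infty}$, and induction gives $\pi_i(\sp{S^n}{\infty})\cong\Z$ for $i=n$ and $0$ otherwise, matching $\widetilde{H}_i(S^n;\Z)$. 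Having checked homotopy invariance, exactness, additivity, and the dimension axiom, the Eilenberg--Steenrod uniqueness theorem for reduced homology theories on connected CW complexes furnishes a natural isomorphism $h_i(X)\cong\widetilde{H}_i(X)$ for every path-connected cell complex $X$, which is the desired conclusion.
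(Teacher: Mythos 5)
The paper contains no proof of this statement: it is quoted directly as a classical theorem of Dold and Thom, with the argument deferred to the cited references, so the only meaningful comparison is with that classical argument---which your outline reproduces faithfully (homotopy invariance via the free abelian monoid structure on $\sp{X}{\infty}$, the quasifibration $\sp{X}{\infty}\to\sp{(X/A)}{\infty}$ with fiber $\sp{A}{\infty}$ as the central technical step, additivity over wedges, the identification $\sp{S^n}{\infty}\simeq K(\Z,n)$, and Eilenberg--Steenrod uniqueness). This is exactly the approach the paper has in mind, since it explicitly names the quasifibration as ``the main object in the proof of Theorem \ref{thm:doldthom}'' when explaining why a discrete analog seems out of reach.
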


We can convert these ideas into graphical terms. Given a base vertex $v_0\in G$, the function $\rp{G}{n}\to \rp{G}{n+1}$ defined in terms of monomials by
\begin{equation}
v_1\cdots v_n\mapsto v_0v_1\cdots v_n
\label{eq:grainc}
\end{equation}
is an injective graph map. Thus we can consider the infinite union
\[\rp{G}{\infty} = \bigcup_{n=1}^\infty \rp{G}{n}.\]
This (possibly infinite) graph is called the \emph{infinite symmetric product} of $G$. The following question was our original motivation for this paper.

\begin{q}
What can be said about the discrete homotopy groups $A_i(\rp{G}{\infty})$ with regard to the reduced discrete singular cubical homology groups $\widetilde{\hc}_i(G)$?
\end{q}

A satisfactory answer might be difficult to obtain for several reasons. First, it is difficult to obtain adequate descriptions of $\rp{G}{\infty}$ in all but the simplest examples. Second, the main object in the proof of Theorem \ref{thm:doldthom}, the notion of a \emph{quasifibration}, seems to have little meaning when translated directly into discrete terms. And third, we suspect that the groups $A_i(\rp{G}{\infty})$ are not finitely generated in general, and therefore probably not isomorphic to $\widetilde{\hc}_i(G)$. However, it would be interesting to know whether this leads to a new discrete homology theory, i.e. whether the functors given by $G\mapsto A_i(\rp{G}{\infty})$ satisfy discrete versions of the Eilenberg-Steenrod axioms (see e.g. \cite[Section 3]{barcelo2014}).

\subsection{Discrete Morse theory}

Discrete Morse theory gives a recipe to simplify any regular CW complex $X$ by reducing the number of cells while preserving the homotopy type. We provide a rough summary of this process; excellent introductions can be found in \cite{forman2002, kozlov2008}. The main ingredient is a \emph{discrete gradient vector field (DGVF)}, i.e. a particular type of partial matching on the face poset of $X$. The cells in this matching can be either removed or collapsed, resulting in a CW complex $Y$ that is homotopy equivalent to $X$. While the ability to eliminate cells is obviously favorable from an enumerative standpoint, this procedure can also reveal subtler topological data. For example, a well-chosen DGVF might yield a complex $Y$ with $\operatorname{dim} Y < \operatorname{dim} X$, so that $H_i(X)$ is trivial for all $i>\operatorname{dim} Y$.

There is a sizable literature on applications of discrete Morse theory to braid groups of graphs \cite{farley2005, farley2008, kim2012, ko2012}. Much of this work centers on a particular DGVF on the discrete configuration space $\ud{G}{n}$ defined by Farley and Sabalka \cite{farley2005}. This DGVF depends on choices of a spanning tree $T$ of $G$ and a depth-first search ordering of $T$. By making appropriate choices, one can obtain a simplification of $\ud{G}{n}$ with favorable properties. This line of inquiry leads, for example, to simple presentations of $B_n(G)$ and explicit formulas for $H_1(\uc{G}{n})$ in terms of graph invariants \cite{farley2012, ko2012}.

From Section 5, we know that if $G$ contains 3- or 4-cycles, then the CW complex $X(\tok{G}{n})$ is obtained from the 2-skeleton $\sk_2 \ud{G}{n}$ of the discrete configuration space by attaching 2-cells. To better understand the complex $X(\tok{G}{n})$, and hence the discrete fundamental group $A_1(\tok{G}{n})$, we are led to ask the following.

\begin{q}
Can the DGVF on $\ud{G}{n}$ be meaningfully adapted to $X(\tok{G}{n})$?
\end{q}

A meaningful adaptation would be one that eliminates as many of the extra attached 2-cells as possible, or one that leads to results resembling those on $B_n(G)$ and $H_1(\uc{G}{n})$. For example, it would be very interesting to find a graph-theoretic formula for the discrete singular cubical homology group $\hc_1(\tok{G}{n})$.
\section*{Acknowledgments}

The author thanks H{\'e}l{\`e}ne Barcelo and Curtis Greene for helpful comments.

\bibliographystyle{abbrv}
\bibliography{lutz-dhotc}
\end{document}